\long\def\symbolfootnote[#1]#2{\begingroup\def\thefootnote{\hspace*{-1mm}\fnsymbol{footnote}}\footnote[#1]{#2}\endgroup}
\def \d {{\rm d}}
\def \A {\mathcal{A}}
\def \B {\mathcal{B}}
\def \D {\mathscr{D}}
\def \P {\mathscr{P}}
\def \PD {\mathrm{PD}}
\def \T {\mathcal{T}}
\def \Dn {\Delta_{n}}
\def \TDn {\Delta_{n,\varepsilon_{n}}}
\def \TNn {\nabla_{n,\varepsilon_{n}}}
\def \Ni {\overline\nabla_{\infty}}
\newtheorem{theorem}{Theorem}[section]
\newtheorem{proposition}[theorem]{Proposition} 
\newtheorem{lemma}[theorem]{Lemma} 
\newtheorem{definition1}[theorem]{Definition} 
\newtheorem{remark1}[theorem]{Remark} 
\title{\bf \vspace{-2.5cm} 
Species dynamics in the two-parameter Poisson--Dirichlet diffusion model
}
\author{\sc Matteo Ruggiero\\[1mm]
\em University of Torino and Collegio Carlo Alberto}
\begin{document}

\maketitle
\thispagestyle{empty}

\symbolfootnote[0]{Webpage: \href{http://web.econ.unito.it/ruggiero}{http://web.econ.unito.it/ruggiero}}

\vspace{-9mm}
\begin{center}
\begin{minipage}{.75\textwidth}
\footnotesize\quad\ 
The recently introduced two-parameter infinitely-many neutral alleles model extends the celebrated one-parameter version, related to Kingman's distribution, to diffusive two-parameter Poisson--Dirichlet frequencies. Here we investigate the dynamics driving the species heterogeneity underlying the two-parameter model. First we show that a suitable normalization of the number of species is driven by a critical continuous-state branching process with immigration. Secondly, we provide a finite-dimensional construction of the two-parameter model, obtained by means of a sequence of Feller diffusions of Wright--Fisher flavor which feature finitely-many types and inhomogeneous mutation rates. Both results provide insight into the mathematical properties and biological interpretation of the two-parameter model, showing that it is structurally different from the one-parameter case in that the frequencies dynamics are driven by state-dependent rather than constant quantities.
\\[-2mm]

\textbf{Keywords}: alpha diversity; infinite-alleles model; infinite dimensional diffusion;  mutation rate; Poisson--Dirichlet distribution; weak convergence.\\[-2mm]

\textbf{MSC}: 60J60, 60G57, 92D25.
\end{minipage}
\end{center}


\section{Introduction}
The two-parameter infinitely-many neutral alleles model is a family of infinite dimensional diffusion processes, 
introduced by \cite{P09} and further investigated by \cite{RW09} and \cite{FS10}, that extends the celebrated one-parameter version, formulated by \cite{W76} and characterized by \cite{EK81}. Throughout the paper we will refer to the one- and two-parameter infinitely-many neutral alleles model simply as the one- and two-parameter model. 
More specifically, let 
\begin{equation}\label{nabla-infty}
\overline\nabla_{\infty}=\bigg\{z\in[0,1]^{\infty}:\,z_{1}\ge z_{2}\ge\ldots\ge0,\;\sum_{i=1}^{\infty}z_{i}\le1\bigg\}
\end{equation}
be the closure of the infinite-dimensional ordered simplex, and define, for constants $0\le\alpha<1$ and $\theta>-\alpha$,
 the second-order differential operator 
\begin{align}\label{operator: theta-sigma}
\mathcal{B}
=\frac{1}{2}\sum_{i,j=1}^{\infty}z_{i}(\delta_{ij}-z_{j})\frac{\partial^{2}}{\partial z_{i}\partial z_{j}}-
\frac{1}{2}\sum_{i=1}^{\infty}(\theta z_{i}+\alpha)\frac{\partial}{\partial z_{i}},
\end{align}
where $\delta_{ij}$ denotes Kronecker delta,
acting on a certain dense subalgebra of the space $C(\overline\nabla_{\infty})$ of continuous functions on $\overline\nabla_{\infty}$. Then  the closure of $\mathcal{B}$ generates a strongly continuous semigroup of contractions on $C(\overline\nabla_{\infty})$, and the sample paths of the associated process are almost surely continuous functions from $[0,\infty)$ to $\overline\nabla_{\infty}$. Such process can be thought of as describing the temporal evolution of the decreasingly ordered allelic frequencies $(z_{1},z_{2},\ldots)$ at a particular locus in an ideally infinite population with infinitely-many possible types or species. 
See \cite{F10} for a review of infinite alleles models. Recently, 
\cite{Fetal11} determined the transition density for the two-parameter case. See also \cite{BO09} for a general construction related to \cite{P09}, and \cite{RWF13} for a partially related model with diffusive parameter $\theta$.

As shown by \cite{P09} and \cite{FS10}, the two-parameter model is reversible and ergodic with respect to the Poisson--Dirichlet distribution with parameters $(\theta,\alpha)$, henceforth denoted $\PD(\theta,\alpha)$. Introduced by \cite{PPY92} (see also \citealp{P95} and \citealp{PY97}), this extends the one-parameter version $\PD(\theta):=\PD(\theta,0)$ due to \cite{K75}, and 
has found numerous applications in several fields. See for example \cite{B06} for fragmentation and coalescent theory, \cite{P06} for excursion theory and combinatorics, \cite{A08} for economics, \cite{LP09} for Bayesian inference, \cite{TJ09} for machine learning and \cite{F10} for population genetics.

Both these random discrete distributions arise as the decreasingly ordered weights of a Dirichlet process \citep{F73}, when $\alpha=0$, and of a two-parameter Poisson--Dirichlet (or Pitman-Yor) process \citep{P95} respectively. Alternatively, they can be constructed by means of the following so-called stick breaking procedure, also known as residual allocation model. Consider a sequence of random variables $(V_{1},V_{2},\ldots)$ obtained by setting
\begin{equation*}
V_{1}=W_{1},
\qquad
V_{n}=W_{n}\prod_{i=1}^{n-1}(1-W_{i}),
\qquad
W_{i}\overset{\text{ind}}{\sim}\text{Beta}(1-\alpha,\theta+i\alpha),
\end{equation*} 
where `$\overset{\text{ind}}{\sim}$' denotes independence, $0\le\alpha<1$ and $\theta>-\alpha$.
The vector $(V_{1},V_{2},\ldots)$ is said to have the GEM$(\theta,\alpha)$ distribution, named after Griffiths, Engen and McCloskey, while the vector of descending order statistics $(V_{(1)},V_{(2)},\ldots)$ is said to have the $\PD(\theta,\alpha)$ distribution. See \cite{FW07} for an infinite-dimensional diffusion process related to GEM distributions.

Besides sharing the above stick-breaking construction strategy, it is well known that the difference between these two random discrete distributions is structural and does not simply rely on a different parametrization. 
For example, the distribution $\PD(\theta)$ can be obtained by ranking and normalizing the jumps of a Gamma subordinator, whereas the $\PD(\theta,\alpha)$ is obtained by performing the same operation on the jumps of a stable subordinator and appropriately mixing over the law of the normalizing factor \citep{P03}. See Section \ref{subsection: 2par}.  Furthermore, the $\PD(\theta)$ distribution is obtained as weak limit of a Dirichlet distributed vector of frequencies \citep{K75}, while a similar construction for the two-parameter case is not available. For what concerns their diffusive counterparts, the properties of the one-parameter model, related to the $\PD(\theta)$ distribution, are well understood, whereas numerous are still the questions regarding the two-parameter model. In particular, given the above considerations, it is  not surprising that a finite-dimensional construction of the process with operator (\ref{operator: theta-sigma}), in terms of a sequence of finite dimensional diffusion processes, is currently available only when $\alpha=0$. 
To be more precise, consider the usual approximating diffusion for the Wright--Fisher discrete genetic model with $n$ selectively neutral alleles and symmetric mutation. This corresponds to the operator 
\begin{align}\label{B-n}
\mathcal{B}_{n}
=&\,\frac{1}{2}\sum_{i,j=1}^{n}z_{i}(\delta_{ij}-z_{j})\frac{\partial^{2}}{\partial z_{i}\partial z_{j}}
+\frac{1}{2}\sum_{i=1}^{n}b_{i}^{(n)}(z)\frac{\partial}{\partial z_{i}},
\end{align}
acting on a suitable subspace of $C^{2}(\nabla_{n})$, with
\begin{equation}
\nabla_{n}=\bigg\{z\in\Ni:\  z_{n+1}=0,\ \sum_{i=1}^{n}z_{i}=1\bigg\},\notag
\end{equation} 
with drift components
\begin{equation}\label{drift-one-par}
b_{i}^{(n)}(z)=\frac{\theta}{n-1}(1-z_{i})-\theta z_{i},
\quad \quad \theta>0.
\end{equation} 
\cite{EK81} formalized the conditions under which the sequence of processes with operators defined by (\ref{B-n})-(\ref{drift-one-par}) converges in distribution to the one-parameter model, with operator obtained by setting $\alpha=0$ in (\ref{operator: theta-sigma}). 
As anticipated, a similar construction for the case $0<\alpha<1,\ \theta>-\alpha$ is currently unavailable. It is to be said that two different sequential constructions of the two-parameter model are given in \cite{P09} and \cite{RW09}. In Section \ref{sec: technical complement}  we will argue that despite offering interesting reads of the two parameter model, neither of these provides particular insight for the interpretation of the species dynamics underlying the infinite-dimensionality structure. In particular this is due to the fact that both are based on finitely-many items. The problem at hand could then be rephrased as that of understanding from which Wright--Fisher-type mechanism, if any, the two-parameter model comes from. 
While the importance of providing a particle construction lies in the fact that the individual dynamics are dealt with explicitly, the contribution of a sequential construction by means of Wright--Fisher-type diffusions lies in the genetic interpretation one yields from the specification of the mutation rates at the $n$th step of the sequence. 
Such interpretation is clear in the case of (\ref{drift-one-par}), whereby each type has the same chance of mutating (cf.~also (\ref{symm-mut}) below), but is somewhat obscure for what regards the role of $\alpha$ in (\ref{operator: theta-sigma}), especially in terms of its effect on finitely-many types. 
This role would be, at least partially, clarified by identifying suitable mutation rates, which are basic building blocks of the model and give important information on the reproductive mechanism of the underlying population. 
Historically, the (chrono)logical process has been the opposite, namely diffusion approximations were introduced for dealing mathematically in a simpler way with multi-type discrete models such as  Wright--Fisher processes. But recent advances, stimulated by neighboring research fields, have provided the infinite-dimensional diffusion without identifying its finite-dimensional source, thus leaving an interpretational gap.

Motivated by these considerations, the purpose of this paper is to investigate what lies underneath the infinite-dimensionality of the two-parameter model in terms of the forces driving the species dynamics. We pursue this task in two different ways. First we derive an $\alpha$-diversity diffusion for the two-parameter model. This is a continuous-time continuous-state extension of the corresponding notion for Poisson-Kingman models \citep{P03}, and describes the dynamics of the suitably normalized number of species in the underlying population. In Section 2 we show that such diffusion for the two-parameter model is a critical continuous-state branching process with immigration, and we discuss a corresponding quantity for the one-parameter case. 
Second, we find explicit transition rates for the mutation process which gives rise to the two-parameter model, and provide a sequential construction for the limiting process in terms of finite-dimensional diffusions similar to those given by (\ref{B-n})-(\ref{drift-one-par}) for the one-parameter case. 
In Section \ref{sec: technical complement} we collect some brief considerations on the problem and the fact that the existing constructions do not provide enough insight from a biological point of view. 
In Section \ref{sec:sequential-construction} we identify 
mutation rates that yield the convergence result; these turn out to
depend on the current species abundances. By means of some additional restrictions, we formalize a sequential construction where each term of the sequence is a Feller diffusion on a finite-dimensional subspace of $\Ni$. 

In achieving the two aforementioned goals, we are able to highlight a key difference between the one- and two-parameter model, conveniently summarized by saying that the species dynamics of the former are driven by constant terms, whereas those of the latter are driven by density-dependent quantities.


\section{Heterogeneity in the two-parameter model}\label{subsection: 2par}

The notion of $\alpha$-diversity was introduced by \cite{P03} for exchangeable partitions induced by random discrete distributions of Poisson-Kingman type. Let $\textsc{pk}(\varrho|t)$ denote the distribution of the weights $(P_{i})_{i\ge1}=(J_{i}/T)_{i\ge1}$ determined by the ranked points $J_{i}$ of a Poisson process with L\'evy density $\varrho$, given $T=t$.
A Poisson-Kingman distribution with L\'evy density $\varrho$ and mixing distribution $\gamma$ on $(0,\infty)$, denoted $\textsc{pk}(\varrho,\gamma)$, is defined as the mixture
\begin{equation*}
\textsc{pk}(\varrho,\gamma)=\int_{0}^{\infty}\textsc{pk}(\varrho|t)\gamma(\d t).
\end{equation*} 
For instance, the $\PD(\theta,\alpha)$ distribution is obtained as a $\textsc{pk}(\varrho_{\alpha},\gamma_{\theta,\alpha})$ model, for $0<\alpha<1$ and $ \theta>-\alpha$, where $\varrho_{\alpha}$ is the L\'evy density of a stable subordinator of index $\alpha$, $\gamma_{\theta,\alpha}$ is
\begin{equation*}
\gamma_{\theta,\alpha}(\d t)=\frac{\Gamma(\theta+1)}{\Gamma(\theta/\alpha+1)}
t^{-\theta}f_{\alpha}(t)\d t,
\end{equation*} 
and $f_{\alpha}(t)$ is the density of a positive stable random variable of index $\alpha$.
Given an exchangeable random partition of $\mathbb{N}$ induced by a  Poisson-Kingman distribution, i.e.~such that its ranked class frequencies have distribution $\textsc{pk}(\varrho,\gamma)$, this is said to have $\alpha$-diversity $S$ if and only if there exists a random variable $S$, with $0<S<\infty$ almost surely, such that
\begin{equation}\label{alpha-diversity}
\lim_{n\rightarrow\infty}\frac{K_{n}}{n^{\alpha}}= S\qquad \text{a.s.},
\end{equation} 
where $K_{n}$ is the number of classes of the partition restricted to $\{1,\ldots,n\}$. For instance, in the case of a $\PD(\theta,\alpha)$ partition, we have $S=T^{-\alpha}$ where $T$ has distribution $\gamma_{\theta,\alpha}$. See \cite{P03}, Proposition 13.

The idea of extending the concept of $\alpha$-diversity from random distributions on simplices to a continuous-time continuous-state framework has been formulated in \cite{RWF13}, where a certain rescaled, inhomogeneous random walk on the integers, which tracks the dynamics of the number of species in a normalized inverse-Gaussian population, is shown to converge to a certain one dimensional diffusion process on $(0,\infty)$.
Here we derive an $\alpha$-diversity diffusion for the two-parameter model, with the aim of providing insight into the species dynamics underlying the infinite-dimensional process.
The derivation is based on the particle construction given in \cite{RW09}, here briefly recalled for ease of the reader. Let $X^{(n)}=(X_{1},\ldots,X_{n})$ be a sample from a two-parameter Poisson--Dirichlet process, or equivalently (cf.~\citealp{P95}) from the generalized P\'olya urn scheme given by $X_{1}\sim P_{0}$ and
\begin{equation}\label{urn}
\mathbb{P}\{X_{i+1}\in\cdot|X_{1},\dots,X_{i}\}=
\frac{\theta+\alpha K_{i}}{\theta+i}P_0(\cdot)+\frac{1}{\theta+i}\sum_{j=1}^{K_{i}}(n_j-\alpha)\,\delta_{X_j^*}(\cdot)
\end{equation} 
for $i=2,\ldots,n-1$. Here $P_{0}$ is a non atomic probability measure on the space of the observables (e.g.~a Polish space), $K_{i}\le i$ denotes the number of distinct elements $(X_{1}^{*},\dots,X_{K_{i}}^{*})$ observed in $(X_{1},\dots,X_{i})$ and $\delta_{X_j^*}$ is a point mass at $X_j^*$. A simple way to make the sample into a Markov chain with fixed marginals is the following. 
Let $X^{(n)}$ be updated at discrete times by replacing a uniformly chosen 
coordinate. Conditionally on $K_{n}=k$ at the current state, and exploiting the exchangeability of the sample, the incoming particle will be of a new type with probability $(\theta+\alpha k_{r})/(\theta+n-1)$, and will be a copy of one still in 
the vector after the removal with probability $(n-1-\alpha k_{r})/(\theta+n-1)$, where $k_{r}$ is the value of $k$ after the removal. 

The following proposition recalls, in a discrete parameter version, the relation between the above described particle chain $\{X^{(n)}(m),m\in\mathbb{N}_{0}\}$ and the two-parameter model.
For notational simplicity we omit here the details about the domain of the limiting operator (cf.~\eqref{symm polyn}-\eqref{domain B con phi-2} below). Here and throughout `$\Rightarrow$' denotes convergence in distribution and $C_{A}([0,\infty))$ denotes the space of continuous functions from $[0,\infty)$ to the space $A$.

\begin{proposition}\emph{[\citealp*{RW09}]}\label{prop: particle conv}
Let $Z(\cdot)$ be the two-parameter model, corresponding to the operator $\B$ as in (\ref{operator: theta-sigma}). Let also $\{X^{(n)}(m),m\in\mathbb{N}_{0}\}$ be the particle chain described above, and define $Y^{(n)}(\cdot)=\{Y^{(n)}(t),t\ge0\}$ by $Y^{(n)}(t)=\eta(X^{(n)}(\lfloor n^{2}t\rfloor))$, where $\eta(x^{(n)})=(z_{1},\ldots,z_{n},0,0,\ldots)$ if $z_{i}$ is the relative size of the $i$-th largest cluster in $x^{(n)}$. Then 
\begin{equation*}
 Y^{(n)}(\cdot) \Rightarrow Z(\cdot),
\quad 
\text{in }C_{\overline\nabla_{\infty}}([0,\infty)),\notag
\end{equation*} 
as $n\rightarrow\infty$.
\end{proposition}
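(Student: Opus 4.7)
The plan is to use Ethier--Kurtz-style generator convergence: identify a core $\mathcal{D}$ for the closure of $\B$ that is dense in $C(\Ni)$, show that the infinitesimal generator $G_n$ of the time-accelerated chain $Y^{(n)}(t)=\eta(X^{(n)}(\lfloor n^{2}t\rfloor))$ satisfies $\|G_n f-\B f\|_\infty\to 0$ for every $f\in\mathcal{D}$, and then invoke a standard weak convergence theorem for Markov processes to obtain convergence in $D_{\Ni}([0,\infty))$. A natural choice for $\mathcal{D}$ is the algebra of symmetric polynomials in the coordinates $z_i$, generated by the power sums $\varphi_m(z)=\sum_{i\ge 1}z_i^m$, $m\ge 2$; these separate points on the ordered simplex and, by Stone--Weierstrass, are dense in $C(\Ni)$.

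The computation of $G_n=n^{2}(T_n-I)$ on $\varphi_m$ proceeds in two stages. Conditioning on the current frequency vector $z$, encoded by type counts $(n_1,\dots,n_{K_n})$ with $n_i/n=z_i$, one step removes a uniformly chosen coordinate (so type $i$ is hit with probability $z_i$) and inserts a particle according to \eqref{urn} with parameters $(\theta,\alpha)$ applied to the post-removal state. Writing the resulting change in frequencies as $\Delta z=O(1/n)$, a second-order Taylor expansion gives
\begin{equation*}
G_n\varphi_m(z)=n\,\E_z[\Delta\varphi_m]+\tfrac{n}{2}\E_z[(\Delta\varphi_m)^2]+O(1/n).
\end{equation*}
The quadratic term reproduces the Kingman--Wright--Fisher resampling piece $\tfrac12\sum_{i,j}z_i(\delta_{ij}-z_j)\partial^{2}/\partial z_i\partial z_j$ in the limit, exactly as in the one-parameter case, since this contribution is insensitive to the mutation mechanism at leading order.

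The delicate piece is the first conditional moment, where the $\alpha$-dependence of the urn enters. Splitting the insertion event into ``new type'', ``match of removed type'', and ``match of a different surviving type'', then summing the resulting contributions to $\Delta z_i$, yields after cancellation the drift $-\tfrac12(\theta z_i+\alpha)\partial_i$ acting on $\varphi_m$, plus correction terms of order $1/n$ that are uniformly controllable on the compact space $\Ni$. The $\theta z_i$ piece is the familiar ``back-mutation to a representative in proportion to $z_i$'' and matches the $\alpha=0$ computation with drift \eqref{drift-one-par}; the constant $\alpha$ piece reflects the $\alpha K_r$ term in the urn numerator, which, once averaged against size-biased removal, produces a downward pressure on each surviving coordinate independent of its size. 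Events where a singleton cluster disappears at removal contribute only $O(1/n)$ to $G_n\varphi_m$ and are negligible in the limit.

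Once $G_nf\to\B f$ uniformly on $\Ni$ for every $f$ in the core, tightness of $\{Y^{(n)}(\cdot)\}$ in $D_{\Ni}([0,\infty))$ is automatic from the compactness of $\Ni$, and the Ethier--Kurtz criterion together with the fact that the closure of $\B$ generates a Feller semigroup on $C(\Ni)$ (as recalled after \eqref{operator: theta-sigma}) yields $Y^{(n)}(\cdot)\Rightarrow Z(\cdot)$ in the Skorokhod space. Finally, since $Z(\cdot)$ has almost surely continuous paths and each jump of $Y^{(n)}$ has $\ell^{1}$-size $O(1/n)$, the convergence upgrades to $C_{\Ni}([0,\infty))$. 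The main obstacle is the first-moment bookkeeping: correctly combining the size-biased removal with the $(\theta,\alpha)$-urn insertion, tracking the rare but non-negligible cluster-disappearance events, and isolating the constant $\alpha$-drift is where the genuinely two-parameter structure appears and the analysis departs from the $\alpha=0$ case.
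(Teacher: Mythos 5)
The paper does not actually prove this proposition: it is recalled verbatim from Ruggiero and Walker (2009), so there is no internal proof to compare against. Your strategy --- uniform generator convergence on the core of symmetric polynomials generated by the $\varphi_{m}$, then the Ethier--Kurtz weak-convergence machinery in $D_{\Ni}([0,\infty))$, then an upgrade to $C_{\Ni}([0,\infty))$ using a.s.\ continuity of the limit paths and the vanishing jump size of $Y^{(n)}$ --- is exactly the route of the original proof, and it is also the template this paper reuses in Theorem \ref{prop: alpha-div-conv-2par} and Theorem \ref{theorem: convergence}.

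Two points in your write-up need repair. First, the displayed expansion $G_{n}\varphi_{m}=n\,\E_{z}[\Delta\varphi_{m}]+\tfrac{n}{2}\E_{z}[(\Delta\varphi_{m})^{2}]+O(1/n)$ is not what a Taylor expansion produces: since $G_{n}=n^{2}(T_{n}-I)$ and $(T_{n}-I)\varphi_{m}(z)=\E_{z}[\Delta\varphi_{m}]$ exactly, the expansion must be carried out in the coordinate increments $\Delta z_{i}=O(1/n)$, giving $G_{n}\varphi_{m}=n^{2}\sum_{i}\partial_{i}\varphi_{m}\,\E_{z}[\Delta z_{i}]+\tfrac{n^{2}}{2}\sum_{i,j}\partial^{2}_{ij}\varphi_{m}\,\E_{z}[\Delta z_{i}\Delta z_{j}]+o(1)$; the substantive point to verify is that $\E_{z}[\Delta z_{i}]=O(n^{-2})$ because the removal and insertion contributions cancel at order $n^{-1}$, leaving precisely $-(\theta z_{i}+\alpha)$ after rescaling. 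Second, the constant $-\alpha$ in the drift does not come from the $\theta+\alpha K_{r}$ numerator governing new types (a newly created type has frequency $1/n$ and perturbs $\varphi_{m}$ only at order $n^{-m}$); it comes from the $-\alpha$ discount in the weights $(n_{j}-\alpha)$ attached to the surviving clusters in \eqref{urn}. Relatedly, you should state explicitly that $P_{0}$ is nonatomic, so that a ``new type'' is almost surely distinct from every existing one; the paper itself stresses in Section \ref{sec: technical complement} that this is the key hypothesis making the particle construction converge. With these corrections the argument goes through as in the cited source.
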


Hence the Markov chain $\{X^{(n)}(m),m\in\mathbb{N}_{0}\}$, once appropriately transformed and rescaled, provides a Moran-type particle construction of the two-parameter model.
Denote now by $\{K_{n}(m),m\in\mathbb{N}_{0}\}$ the chain which keeps track of the number of distinct types in $X^{(n)}(m)$, and let $M_{1,n}$ denote the number of types in $X^{(n)}(m)$ with only one representative. The transition 
probabilities for $K_{n}(m)$, denoted for short
\begin{equation*}
p(k,k')=\mathbb{P}\{K_{n}(m+1)=k'|K_{n}(m)=k\}
\end{equation*}
are given by 
\begin{align}\label{K-transition-0}
p(k,k')=&\,\left\{
\begin{array}{ll}
\displaystyle \left(1-\frac{M_{1,n}}{n}\right)\frac{\theta+\alpha k}{\theta+n-1},  &   \text{if }k'=k+1,\\[3mm] 
\displaystyle \frac{M_{1,n}}{n(\theta+n-1)}(n-1-\alpha(k-1)),  \qquad&   \text{if }k'=k-1,\\[3mm]
1-p(k,k+1)-p(k,k-1), & \text{if }k'=k,\\[2mm]
0,&\text{else},
\end{array}
\right.
\end{align}
for $1\le k\le n$. Here $M_{1,n}/n$ is the probability of removing a cluster of size one, and $k=1$ and $k=n$ imply $p(1,0)=0$ and $p(n,n+1)=0$ respectively. Since (\ref{K-transition-0}) need not be Markovian, we use an approximation of $M_{1,n}$ based on the following asymptotic result.
From (\ref{alpha-diversity}) and Lemma 3.11 in \cite{P06}, we have that the number $M_{1,n}$ of clusters of size one observed in the sample is such that
\begin{equation}\label{M-convergence}
\frac{M_{1,n}}{n^{\alpha}}\rightarrow \alpha S\qquad \text{a.s.},\notag
\end{equation} 
so that $M_{1,n}\approx \alpha k$. This yields
\begin{align}\label{K-transition}
p(k,k')=&\,\left\{
\begin{array}{ll}
\displaystyle \left(1-\frac{\alpha k}{n}\right)\frac{\theta+\alpha k}{\theta+n-1}+o(n^{-1+\alpha}),  \quad \quad &   \text{if }k'=k+1,\\[3mm]
\displaystyle \frac{\alpha k}{n(\theta+n-1)}(n-1-\alpha(k-1))+o(n^{-1+\alpha}),  \qquad&   \text{if }k'=k-1,\\[3mm]
1-p(k,k+1)-p(k,k-1), & \text{if }k'=k,\\[2mm]
0,&\text{else}.
\end{array}
\right.
\end{align}
The following theorem identifies the $\alpha$-diversity diffusion for the two-parameter model.
Denote by $C_{0}([0,\infty))$ the space of continuous functions on $[0,\infty)$ vanishing at infinity.

\begin{theorem}\label{prop: alpha-div-conv-2par}
Let $\{K_{n}(m),m\in\mathbb{N}_{0}\}$ be a Markov chain on $\mathbb{N}$ with transition probabilities as in  (\ref{K-transition}), for $0<\alpha<1$ and $\theta>-\alpha$, and define $\{\tilde K_{n}(t),t\ge0\}$ by letting $\tilde K_{n}(t)=K_{n}(\lfloor n^{1+\alpha}t\rfloor )/n^{\alpha}$. Let also $\{S_{\theta,\alpha}(t),t\ge0\}$ be a diffusion process on $[0,\infty)$ driven by the stochastic differential equation
\begin{equation}\label{alpha-div-diffusion-2par}
\d S_{\theta,\alpha}(t)=\theta\d t+\sqrt{2\alpha S_{\theta,\alpha}(t)}\,\d B(t),
\end{equation}
where $B(t)$ is a standard Brownian motion.
If $\tilde K_{n}(0)\Rightarrow S_{\theta,\alpha}(0)$, then
\begin{equation}\label{weak-conv-S-2par}\notag
\{\tilde K_{n}(t),t\ge0\}\Rightarrow\{S_{\theta,\alpha}(t),t\ge0\},
\qquad 
\text{ in } C_{[0,\infty)}([0,\infty)),
\end{equation} 
as $n\rightarrow\infty$.
\end{theorem}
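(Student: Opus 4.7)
The plan is to establish the weak convergence via generator convergence in the style of Ethier and Kurtz. The generator of the limiting diffusion $S_{\theta,\alpha}$ in \eqref{alpha-div-diffusion-2par} is, by It\^o's formula,
\begin{equation*}
\mathcal{A}f(s) = \theta f'(s) + \alpha s f''(s),
\end{equation*}
acting on a suitable core such as $C_c^\infty([0,\infty))$. This is the generator of a critical continuous-state branching process with immigration of rate $\theta$; since the diffusion coefficient $\sqrt{2\alpha s}$ is H\"older-$\tfrac{1}{2}$ and the drift is constant, Yamada--Watanabe yields pathwise uniqueness and hence well-posedness of the martingale problem for $\mathcal{A}$.

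The heart of the argument is to show that the discrete generator $\mathcal{A}_{n}$ of $\tilde K_{n}$ converges to $\mathcal{A}$. Writing $s=k/n^{\alpha}$ and $h=1/n^{\alpha}$, a Taylor expansion gives for $f\in C_{c}^{\infty}([0,\infty))$
\begin{equation*}
\mathcal{A}_{n}f(s)=n[p(k,k{+}1)-p(k,k{-}1)]\,f'(s)+\tfrac{1}{2}n^{1-\alpha}[p(k,k{+}1)+p(k,k{-}1)]\,f''(s)+R_{n}(s),
\end{equation*}
with $|R_{n}(s)|\le C\|f'''\|_{\infty}\,n^{1-2\alpha}(p(k,k{+}1)+p(k,k{-}1))=O(n^{-\alpha})$ uniformly on compacts, since $p(k,k{\pm}1)=O(n^{\alpha-1})$ when $k=\lfloor sn^{\alpha}\rfloor$. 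A direct computation from \eqref{K-transition} yields, after a key cancellation of the dominant $\alpha kn$ contributions in the two transition probabilities,
\begin{equation*}
n[p(k,k{+}1)-p(k,k{-}1)]=\frac{n\theta+\alpha k(1-\alpha-\theta)}{\theta+n-1}+o(n^{\alpha})\longrightarrow\theta,
\end{equation*}
and analogously $n^{1-\alpha}[p(k,k{+}1)+p(k,k{-}1)]\to 2\alpha s$, so $\mathcal{A}_{n}f\to \mathcal{A}f$ uniformly on compact subsets of $[0,\infty)$. This cancellation is the essential mechanism: it is precisely the reason the $n^{1+\alpha}$ time acceleration produces a nondegenerate limit rather than an unbounded drift.

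To upgrade generator convergence to weak convergence in path space one also needs a compact containment condition, which can be obtained by a Lyapunov argument with $V(s)=s$ and the asymptotically constant drift $\theta$, yielding a uniform-in-$n$ bound $\mathbb{E}[\sup_{t\le T}\tilde K_{n}(t)]\le C(T)$. Combined with well-posedness of the limiting martingale problem and the assumption $\tilde K_{n}(0)\Rightarrow S_{\theta,\alpha}(0)$, a standard Ethier--Kurtz result (e.g.\ Theorem 4.2.6 or Corollary 4.8.7 of Ethier and Kurtz, 1986) yields weak convergence in the Skorohod space, which upgrades to $C_{[0,\infty)}([0,\infty))$ thanks to continuity of $S_{\theta,\alpha}$. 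The main technical obstacle is controlling the $o(n^{-1+\alpha})$ correction terms in \eqref{K-transition} coming from the approximation $M_{1,n}\approx\alpha k$: after multiplication by $n^{1+\alpha}h^{\ell}$ they must contribute $o(1)$, which is delicate because they are of the same order as the leading transition probabilities themselves. A secondary subtlety is the boundary behaviour at $s=0$, where the chosen core of test functions must be compatible with the entrance or regular nature of the origin for $S_{\theta,\alpha}$ depending on the size of $\theta$.
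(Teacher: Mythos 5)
Your proposal is correct and takes essentially the same route as the paper: Taylor-expand the rescaled discrete generator, exploit the cancellation of the $O(n^{\alpha})$ contributions so that $n[p(k,k{+}1)-p(k,k{-}1)]\to\theta$ and $n^{1-\alpha}[p(k,k{+}1)+p(k,k{-}1)]\to 2\alpha s$, and conclude with the Ethier--Kurtz machinery, upgrading from the Skorohod space to $C_{[0,\infty)}([0,\infty))$ by continuity of the limit. The only divergence is the endgame --- you invoke well-posedness of the martingale problem via Yamada--Watanabe plus compact containment, whereas the paper takes the semigroup route (Theorems 1.6.5 and 4.2.6 of Ethier--Kurtz), for which the Feller core with the correct boundary conditions at $s=0$ is supplied by Proposition \ref{prop:boundaries} --- and the $o(n^{-1+\alpha})$ corrections you flag as delicate are carried along, and absorbed without further comment, in the paper's own proof as well.
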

\begin{proof}
Denote by $U_{n}$ the semigroup induced by (\ref{K-transition}). For notational brevity, here we do not distinguish between $(n,k)$ and $(n-1,k-1)$, since they are asymptotically equivalent. Then, for $f\in C_{0}([0,\infty))$, we can write
\begin{align*}
(U_{n}-I)f\bigg(\frac{k}{n^{\alpha}}\bigg)
=&\,\left[f\bigg(\frac{k+1}{n^{\alpha}}\bigg)-f\bigg(\frac{k}{n^{\alpha}}\bigg)\right]\bigg(1-\frac{\alpha k}{n}\bigg)\frac{\theta+\alpha k}{\theta+n}\\
&\,+\left[f\bigg(\frac{k-1}{n^{\alpha}}\bigg)-f\bigg(\frac{k}{n^{\alpha}}\bigg)\right]
\frac{\alpha k(n-\alpha k)}{n(\theta+n)}+o(n^{-1+\alpha}).
\end{align*}
By means of a Taylor expansion we get
\begin{align}
\notag
(U_{n}-I)&\,f\bigg(\frac{k}{n^{\alpha}}\bigg)
=\frac{1}{n^{\alpha}}C^{(1)}_{\theta,\alpha,k,n}f'\bigg(\frac{k}{n^{\alpha}}\bigg)
+\frac{1}{2n^{2\alpha}}C^{(2)}_{\theta,\alpha,k,n}f''\bigg(\frac{k}{n^{\alpha}}\bigg)
+o(n^{-1+\alpha}),
\end{align}
where
\begin{equation*}
C^{(1)}_{\theta,\alpha,k,n}=\bigg(1-\frac{\alpha k}{n}\bigg)\frac{\theta+\alpha k}{\theta+n}
-\frac{\alpha k}{n}\bigg(\frac{n-\alpha k}{\theta+n}\bigg)
=\frac{\theta}{\theta+n}+o(n^{-1})
\end{equation*} 
and
\begin{equation*}
C^{(2)}_{\theta,\alpha,k,n}=\bigg(1-\frac{\alpha k}{n}\bigg)\frac{\theta+\alpha k}{\theta+n}
+\frac{\alpha k}{n}\bigg(\frac{n-\alpha k}{\theta+n}\bigg)
=\frac{2\alpha k}{\theta+n}+o(n^{-1+\alpha}).
\end{equation*} 
Using (\ref{alpha-diversity}), it follows that
\begin{equation}\label{operator convergence alpha}
\sup_{s\in[0,\infty)}|\mathcal{L}f(s)-n^{1+\alpha}(U_{n}-I)f(s)|\rightarrow0,
 \qquad \text{as }n\rightarrow\infty,
\end{equation} 
where $\mathcal{L}f(s)=\theta f'(s)+\alpha sf''(s)$ is the infinitesimal operator corresponding to (\ref{alpha-div-diffusion-2par}). Here (\ref{operator convergence alpha}) holds for every $f$ belonging to an appropriate restriction $\mathscr{D}(\mathcal{L})$ of $C_{0}([0,\infty))$ (to be formalized in Proposition \ref{prop:boundaries} below). Under these conditions,
Theorem 1.6.5 in \cite{EK86} implies that
\begin{equation}\label{semigroup convergence}
\sup_{s\in[0,\infty)}|U(t)f(s)-U_{n}(\lfloor n^{1+\alpha}t\rfloor)f(s)|\rightarrow0, 
\quad 
f\in C_{0}([0,\infty)),
\end{equation} 
as $n\rightarrow\infty$ and for all $t\ge0$, 
where $U$ is the semigroup operator corresponding to $\mathcal{L}$. 
The assertion of the theorem with $C_{[0,\infty)}([0,\infty))$ replaced by $D_{[0,\infty)}([0,\infty))$ now follows from \eqref{semigroup convergence} and 
Theorem 4.2.6 in \cite{EK86}. 
Finally, the convergence holds in $C_{[0,\infty)}([0,\infty))$ since the limit probability measure puts mass one on $C_{[0,\infty)}([0,\infty))$, and the Skorohod topology relativized to $C_{[0,\infty)}([0,\infty))$ coincides with the uniform topology of $C_{[0,\infty)}([0,\infty))$ (cf.~\citealp{B68}, Section 18).
\end{proof}

Hence the dynamic heterogeneity of the two-parameter model is described by a non negative diffusion obtained with a space-time rescaling which depends on the parameter $\alpha$.
Note that $S_{\theta,\alpha}(\cdot)$ in (\ref{alpha-div-diffusion-2par}) can be seen as
a critical continuous-state branching process with immigration \citep*{KW71,L06}, obtained for example as diffusion approximation of a Galton-Watson branching process with immigration, with unitary mean number of offspring per individual. Here $\theta>0$ is interpreted as the immigration rate; the case $\theta<0$ has been treated in \cite{GY03}.	

The next proposition, which provides the complete boundary behaviour of the process driven by (\ref{alpha-div-diffusion-2par}) and formalizes its well-definedness, is not new and included for formal completeness. 
Let $\mathcal{L}$ be the second order differential operator
\begin{equation}\label{S-generator}
\mathcal{L}=\theta\frac{\d}{\d s}+\alpha s\frac{\d^{2}}{\d s^{2}},
\qquad 0<\alpha<1,\ \theta>-\alpha.
\end{equation}

\begin{proposition}\label{prop:boundaries}
The process $\{S_{\theta,\alpha}(t),t\ge0\}$ driven by (\ref{alpha-div-diffusion-2par}) has the following boundary behavior: the boundary $s=0$ is absorbing for $\theta\le 0$, instantaneously reflecting for $0<\theta< \alpha$, and entrance for $\theta\ge\alpha$;
 the boundary $s=\infty$ is natural and non attracting for $\theta\le \alpha$, and natural and attracting for $\theta>\alpha$.
Moreover, $S_{\theta,\alpha}(t)$ is null recurrent for $\theta=\alpha$ and transient for $\theta\ne \alpha$.
For $\mathcal{L}$ as in (\ref{S-generator}), define
\begin{equation*}
\mathscr{D}(\mathcal{L})=
\Big\{f\in C_{0}([0,\infty))\cap C^{2}((0,\infty)):\ \mathcal{L}f\in C_{0}([0,\infty))\Big\},
\end{equation*} 
and
\begin{equation*}
\mathscr{D}_{\theta,\alpha}(\mathcal{L})=
\left\{
\begin{array}{ll}
f\in \mathscr{D}(\mathcal{L}),  & \text{if } \theta\ge\alpha, \\[3mm]
f\in \mathscr{D}(\mathcal{L}): \lim_{x\rightarrow0}x^{\theta/\alpha}f'(x)=0,
\quad  & \text{if } 0<\theta<\alpha,\\[3mm]
f\in \mathscr{D}(\mathcal{L}): \lim_{x\rightarrow0}\mathcal{L}f(x)=0,
\quad  & \text{if } -\alpha<\theta\le 0.
\end{array}
\right.
\end{equation*}
Then $\{(f,\mathcal{L}f):f\in\mathscr{D}_{\theta,\alpha}(\mathcal{L})\}$ generates a Feller semigroup on $C_{0}([0,\infty))$.
\end{proposition}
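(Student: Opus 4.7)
The plan is to treat \eqref{alpha-div-diffusion-2par} as a regular one-dimensional It\^o diffusion on $[0,\infty)$ and to obtain the boundary classification, recurrence properties and Feller generation from its scale function and speed measure via Feller's classical machinery. With drift $\theta$ and diffusion coefficient $2\alpha s$, the scale density is $s'(x)=x^{-\theta/\alpha}$ and the speed density is $m'(x)=\alpha^{-1}x^{\theta/\alpha-1}$, so that $s(x)\sim x^{1-\theta/\alpha}/(1-\theta/\alpha)$ near both ends for $\theta\ne\alpha$, $s(x)=\log x$ when $\theta=\alpha$, and $m$ has power-law tails with exponent $\theta/\alpha-1$. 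Equivalently, the rescaling $X_{t}=2S_{\theta,\alpha}(t)/\alpha$ turns $S_{\theta,\alpha}$ into a squared Bessel process of dimension $\delta=2\theta/\alpha$, so every claim can be read off from the textbook BESQ theory, or alternatively from the general theory of critical continuous-state branching processes with immigration as in \citealp{KW71} and \citealp{L06}.

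The boundary classification at $s=0$ then reduces to Feller's test, which for these power-law densities amounts to checking convergence of $\int_{0}x^{\theta/\alpha}\,dx$ (always finite) and $\int_{0}x^{-\theta/\alpha}\,dx$ (finite iff $\theta<\alpha$); this yields the regular/entrance dichotomy, while the further split $\theta\le 0$ versus $0<\theta<\alpha$ is determined by the sign of the drift and produces respectively exit (absorbing) or regular (reflecting) behaviour. At $s=\infty$, since $s(\infty)=\infty$ for $\theta\le\alpha$ and $s(\infty)<\infty$ for $\theta>\alpha$, one obtains natural non-attracting in the first case and natural attracting in the second. The stated recurrence/transience classification follows from the same scale and speed computations applied to the relevant sub-intervals of $(0,\infty)$: $\theta=\alpha$ gives $|s|=\infty$ at both ends together with $m([0,\infty))=\infty$, hence null recurrence, while the remaining cases are read off from the finiteness of either the scale at one end or the speed on a half-line.

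To prove that $\mathscr{D}_{\theta,\alpha}(\mathcal{L})$ is a core for the Feller semigroup generator I would verify the Hille--Yosida range condition: for each $\lambda>0$ and each $g\in C_{0}([0,\infty))$, solve $(\lambda-\mathcal{L})u=g$ and show that the admissible solution lies in $\mathscr{D}_{\theta,\alpha}(\mathcal{L})$. The homogeneous equation $\alpha x u''+\theta u'-\lambda u=0$ has two linearly independent solutions expressible via confluent hypergeometric functions, one bounded at $0$ and the other behaving like $x^{1-\theta/\alpha}$ (or $\log x$ when $\theta=\alpha$). For $\theta\ge\alpha$ only the bounded branch is admissible in $C_{0}([0,\infty))$ and no extra boundary condition is needed; for $0<\theta<\alpha$ both branches are bounded near $0$ and the side condition $\lim_{x\to 0}x^{\theta/\alpha}u'(x)=0$ selects the regular one, which is the analytic encoding of instantaneous reflection; for $-\alpha<\theta\le 0$ the condition $\lim_{x\to 0}\mathcal{L}u(x)=0$ kills the outgoing flux and enforces absorption.

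The main obstacle I expect is this last step, namely verifying that in each regime the resolvent constructed from the corresponding Green's kernel maps $C_{0}([0,\infty))$ into $\mathscr{D}_{\theta,\alpha}(\mathcal{L})$ with dense range, which requires explicit decay estimates on the two fundamental solutions at infinity. This may be carried out case by case, or bypassed by appealing to the Kawazu--Watanabe construction of CBI processes, which already delivers a Feller semigroup whose generator extends $\mathcal{L}$, thereby reducing the problem to showing that $\mathscr{D}_{\theta,\alpha}(\mathcal{L})$ is dense in the domain of that generator.
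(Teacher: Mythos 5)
Your boundary classification is carried out exactly in the spirit of the paper's proof: the paper simply computes the scale density $x^{-\theta/\alpha}$ (this is the $\exp\{\int_1^x(\theta/\alpha y)\,\d y\}$ remark in its proof) and reads the classification off \cite{IW89}, Example IV.8.2 (which is precisely the squared-Bessel/CBI example you invoke via $X_t=2S_{\theta,\alpha}(t)/\alpha\sim\mathrm{BESQ}(2\theta/\alpha)$) together with the Karlin--Taylor boundary table; recurrence/transience comes from \cite{IW89}, Theorem VI.3.1. Your scale and speed computations and the resulting case analysis at $0$ and $\infty$ are correct (the constant $(2\alpha)^{-1}$ versus $\alpha^{-1}$ in the speed density is immaterial), so for the first two assertions you and the paper are doing the same thing.

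Where you genuinely diverge is the Feller-generation claim, and this is also where your proposal is still a program rather than a proof. You propose to verify the Hille--Yosida range condition by solving $(\lambda-\mathcal{L})u=g$ explicitly with confluent hypergeometric functions and checking which branch each lateral condition selects; you then correctly identify the decay estimates on the fundamental solutions and the mapping of the resolvent into $\mathscr{D}_{\theta,\alpha}(\mathcal{L})$ as the unresolved obstacle. The paper avoids all of this: once the boundary classification is established, Theorem 8.1.1 and Corollary 8.1.2 of \cite{EK86} state in complete generality that the one-dimensional diffusion operator, restricted by exactly these lateral conditions (no condition at an entrance boundary, $\lim_{x\to0}x^{\theta/\alpha}f'(x)=0$ at a regular reflecting boundary, $\lim_{x\to0}\mathcal{L}f(x)=0$ at an exit boundary), generates a Feller semigroup on $C_0([0,\infty))$. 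So the classification \emph{is} the proof of the generation statement, and no resolvent kernel or hypergeometric analysis is needed. Your fallback via Kawazu--Watanabe has the additional defects that it does not cover the absorbing regime $-\alpha<\theta\le0$ (there is no nonnegative immigration rate there; the paper points to \cite{GY03} for $\theta<0$) and that it still leaves you to identify $\mathscr{D}_{\theta,\alpha}(\mathcal{L})$ as a core of the CBI generator, which is essentially the same unfinished work in different clothing. If you replace your third and fourth paragraphs with an appeal to the Ethier--Kurtz one-dimensional Feller boundary theorem, the argument closes.
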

\begin{proof}
The first assertion follows from \cite{IW89}, Example IV.8.2 and \cite{KT81}, Table 15.6.2.
The second assertion follows from the first assertion and \cite{IW89}, Theorem VI.3.1.
The third assertion follows from the first assertion, together with the fact that $\exp\{\int_{1}^{x}(\theta/\alpha y)\d y\}=x^{\theta/\alpha}$, and with Theorem 8.1.1 and Corollary 8.1.2 in \cite{EK86}. 
\end{proof}

The lack of positive recurrence immediately determines the non stationarity of the process $\{S_{\theta,\alpha}(t),t\ge0\}$.

We conclude the section with a brief discussion of the corresponding process for the one-parameter model. 
Although the notion of $\alpha$-diversity is given for Poisson-Kingman models with $0<\alpha<1$ (cf.~\citealp{P03}), a result analogous to Theorem \ref{prop: alpha-div-conv-2par} can be nonetheless derived for the one-parameter case, for which $\alpha=0$. 
The limit corresponding to (\ref{alpha-diversity}) when $\alpha=0$ is provided by \cite{KH73} and is
\begin{equation*}
\lim_{n\rightarrow\infty}\frac{K_{n}}{\log n}= \theta,\qquad \text{a.s.}
\end{equation*} 
Hence we expect the process for the normalized number of species to converge to a constant process, i.e.
\begin{equation}\label{norm-Kn-1par}\notag
\{K_{n}(\lfloor c_{n}t\rfloor)/\log n,t\ge0\}\overset{p}{\longrightarrow}S_{\theta,0}(t)\equiv\theta,
\end{equation} 
for some $c_{n}\rightarrow\infty$. Setting $\alpha=0$ in \eqref{K-transition-0} and proceeding similarly to the proof of Theorem \ref{prop: alpha-div-conv-2par}, we get
\begin{align*}
(U_{n}&-I)f\bigg(\frac{k}{\log n}\bigg)
=\frac{1}{\log n}f'\bigg(\frac{k}{\log n}\bigg)
\bigg[\bigg(1-\frac{w}{n}\bigg)\frac{\theta}{\theta+n}
-\frac{w}{n}\bigg(\frac{n}{\theta+n}\bigg)\bigg]\notag\\
&\,+\frac{1}{2\log^{2} n}f''\bigg(\frac{k}{\log n}\bigg)
\bigg[\bigg(1-\frac{w}{n}\bigg)\frac{\theta}{\theta+n}
+\frac{w}{n}\bigg(\frac{n}{\theta+n}\bigg)\bigg]+o((n\log n)^{-1})
\end{align*}
where $w$ stands for the fact that
\begin{equation*}
M_{1,n}\overset{d}{\longrightarrow} W\sim\text{Poisson}(\theta);
\end{equation*} 
 see \cite{ABT92}. Hence in the limit the argument of the derivatives is constant, and $c_n(U_{n}-I)f(k/\log n)$, with $c_{n}=n\log n$, converges to 0.
It follows that the dynamics of the number of species underlying the infinite-alleles models are driven by the constant process $S_{\theta,0}(t)\equiv\theta$ in the one-parameter case, and by the diffusion process $S_{\theta,\alpha}(t)$ on $[0,\infty)$ with state-dependent volatility in the two-parameter case. This confirms the structural difference between one- and two-parameter models also from this dynamic viewpoint. A similar difference between the two cases will be found again in Section \ref{sec:sequential-construction} with a different approach.


\section{Finite-dimensional construction of the two-parameter model}

\subsection{Preliminary remarks}\label{sec: technical complement}

In the Introduction it was mentioned that two different sequential constructions of the two-parameter model have been provided in \cite{P09} and \cite{RW09}. In this section we briefly outline why these offer only partial insight into the dynamics underlying the two-parameter model from a biological perspective, motivating the need for further investigation. 

The above-mentioned constructions are given respectively by a sequence taking values in the space of partitions of $\mathbb{N}$, and by the Moran-type particle representation outlined in Section \ref{subsection: 2par} above. Both cases are based on a dynamic system of finitely-many exchangeable particles and exhibit right-continuous sample paths,
whereas (\ref{B-n}) (with an appropriate domain) characterizes an $n$-dimensional diffusion process. 
Another notable feature of these constructions is the assumption that the distribution that generates the mutant types is nonatomic and thus selects types which appear for the first time with probability one (in the framework of \cite{P09} this amounts to say that a new box is occupied with probability one). In particular, such feature turns out to be the key for proving the weak convergence of the sequences to the two-parameter model
 (see e.g.~\cite{RW09} after Remark 3.1). 
Such assumption of non atomicity cannot be applied in a construction similar to (\ref{B-n})-(\ref{drift-one-par}), because the mass of the distribution must concentrate on the enumerated types, in order to keep the maximum amount of species constant in time. 
To be more precise about this aspect, note first that the drift coefficients in the Wright--Fisher operator (\ref{B-n}) are determined as 
\begin{equation}\label{drift decomposition}
b^{(n)}_{i}(z)=\sum_{j\ne i}q^{(n)}_{ji}(z)z_{j}-\sum_{j\ne i}q^{(n)}_{ij}(z)z_{i}.
\end{equation} 
Here $q_{ij}$ is the intensity of a mutation from type $i$ to type $j$, and diagonal elements are $q_{ii}=-\sum_{j\ne i}^{n}q_{ij}$, so that $(q_{ij})_{i,j=1,\ldots,n}$ is a square matrix with nonnegative off-diagonal elements and row sums equal to zero. In general the mutation rate $q^{(n)}_{ij}(z)$ can be thought of as state-dependent, but in many interesting cases only the dependence on $n$ is needed.
The drift (\ref{drift-one-par}), for example, is obtained by taking parent-independent symmetric mutations with rates
\begin{equation}\label{symm-mut}
q_{ij}^{(n)}=\frac{\theta}{n-1},\qquad \qquad  i\ne j,
\end{equation} 
whereby when a type $i$ mutates, the new type will be any of the other $n-1$ types with equal chances, and $\theta$ controls how often on average mutations occur. In this case then the mutant type is chosen with uniform probability, and the mutant type distribution is discretely supported. Such derivation of the one-parameter model can be extended to have non-symmetric mutation (see for example \cite{EK81}, Theorem 3.4), but the difference is not relevant for our purposes.

Hence the two existing constructions for the two-parameter model feature finitely-many objects, potentially of infinitely-many types, and a diffuse mutant type distribution, while the desired construction should feature infinitely-many objects of finitely-many types and a discretely supported mutant type distribution.

From a mathematical point of view, ideally we seek mutation rates $q^{(n)}_{ij}(z)$ yielding, through \eqref{drift decomposition}, the $i$th component limit drift term
\begin{equation}\label{limit drift}
b^{(n)}_{i}(z)\overset{\text{unif}}{\longrightarrow}-\theta z_{i}-\alpha,
\end{equation} 
and the $b^{(n)}_{i}(z)$'s satisfy the boundary conditions 
 \begin{equation}\label{boundary conditions}
b_{i}^{(n)}(z)\ge 0,\quad  \text{if }z_{i}=0,\qquad\quad
b_{i}^{(n)}(z)\le 0,\quad  \text{if }z_{i}=1,
\end{equation} 
for $z\in \Delta_{n}$ and
\begin{equation}\label{Delta-n}
\Delta_{n}=\left\{z\in[0,1]^{n}:\ z_{i}\ge0,\ \sum_{i=1}^{n}z_{i}=1\right\}.
\end{equation} 
However, obtaining \eqref{limit drift} and \eqref{boundary conditions} jointly is clearly not possible, since in $z_{i}=0$ the drift should be non negative for all $n$ but strictly negative in the limit. 
Since condition \eqref{boundary conditions} is crucial for the well-definedness of the $n$th term of the sequence, the alternative strategy will then be to relax \eqref{limit drift} to the weaker condition
\begin{equation}\label{drifts sum convergence}
\sum_{i=1}^{n}b^{(n)}_{i}(z)\frac{\partial f(z)}{\partial z_{i}}\overset{\text{unif}}{\longrightarrow}-\sum_{i=1}^{\infty}(\theta z_{i}+\alpha)\frac{\partial f(z)}{\partial z_{i}},
\end{equation} 
as $n\rightarrow\infty$, for a sufficiently large set of functions $f(z)$. Obtaining rates $q_{ij}^{(n)}(z)$ which yields drift terms satisfying \eqref{drifts sum convergence}, together with some additional restrictions concerning the volatility and the state space of the process, will then suffice to provide the desired convergence.


\subsection{Sequential construction}\label{sec:sequential-construction}

Let $n\ge2$ throughout the section, and let $\Dn$ be as in (\ref{Delta-n}).
Consider a sequence of real numbers $\{\varepsilon_{n}\}_{n\in\mathbb{N}}$ satisfying
\begin{equation}\label{epsilon n}
0<\varepsilon_{n}<\frac{1}{n}\ \ \forall n,
\qquad 
\varepsilon_{n}=o(n^{-1}),
\end{equation} 
and define the compact subspace of $\Dn$ given by
\begin{equation*}
\TDn=\left\{z\in[0,1]^{n}:\ z_{i}\ge\varepsilon_{n}, \sum_{i=1}^{n}z_{i}=1\right\},
\end{equation*} 
where $z\in\TDn$ implies $z_{i}\in[\varepsilon_{n},1-(n-1)\varepsilon_{n}]\ne \emptyset$ for all $i$. See Figure \ref{fig: simplex} below.
Consider the second order differential operator
\begin{equation}\label{operator An}
\A_{n}
=\frac{1}{2}\sum_{i,j=1}^{n}a^{(n)}_{ij}(z)\frac{\partial^{2}}{\partial z_{i}\partial z_{j}}
+\frac{1}{2}\sum_{i=1}^{n}b_{i}^{(n)}(z)\frac{\partial}{\partial z_{i}},
\end{equation} 
with domain 
\begin{equation}\label{domain-An}
\D(\A_{n})=\left\{f:\ f\in C^{2}(\TDn)\right\},
\end{equation} 
where 
\begin{equation*}
C^{2}(\TDn)=\left\{f\in C(\TDn):\ \exists \tilde f\in C^{2}(\mathbb{R}^{n}),\ \tilde f|_{\TDn}=f\right\}.
\end{equation*} 
The covariance components in (\ref{operator An}) are specified to be
\begin{align}\label{cov-n}
a^{(n)}_{ij}(z)
=&\,(z_{i}-\varepsilon_{n})(\delta_{ij}(1-n\varepsilon_{n})-(z_{j}-\varepsilon_{n})) \\
=&\,
\left\{
\begin{array}{ll}
(z_{i}-\varepsilon_{n})(1-(n-1)\varepsilon_{n}-z_{i}),  \qquad   & \text{if }i=j,  \\[1mm]
-(z_{i}-\varepsilon_{n})(z_{j}-\varepsilon_{n}),   & \text{if }i\ne j.
\end{array}
\right.\notag
\end{align}
These can be seen as Wright--Fisher-type covariance terms restricted to $\TDn$, since they vanish at $z_{i}=\varepsilon_{n}$ and $z_{i}=1-(n-1)\varepsilon_{n}$. 
Additionally, consider the state-dependent mutation rates 
\begin{equation}\label{mut rates}
q_{ij}^{(n)}(z)=\frac{\theta}{n-1}+\frac{2\alpha j}{z_{i}n(n+1)}
\left[1-\exp\left\{-2(z_{i}-\varepsilon_{n})/\varepsilon_{n}\right\}\right], \qquad i\ne j.
\end{equation}  

Before providing some considerations on the form of $q_{ij}^{(n)}(z)$, note that (\ref{drift decomposition}) yields the drift components
\begin{align}\label{drift-n}
\begin{array}{rl}
b_{i}^{(n)}(z)
=&\!\displaystyle \frac{\theta}{n-1}(1-z_{i})-\theta z_{i}
+\frac{2\alpha i}{n(n+1)}
\sum_{j=1}^{n}
\left[1-\exp\left\{-2(z_{j}-\varepsilon_{n})/\varepsilon_{n}\right\}\right]\\
&-\alpha \left[1-\exp\left\{-2(z_{i}-\varepsilon_{n})/\varepsilon_{n}\right\}\right].
\end{array}
\end{align}
Here the first two terms of $b_{i}^{(n)}(z)$ equal
\begin{equation*}
\frac{\theta}{n-1}(1-(n-1)\varepsilon_{n}-z_{i})-\theta (z_{i}-\varepsilon_{n}),
\end{equation*} 
and $z_{i}=1-(n-1)\varepsilon_{n}$ implies $z_{j}=\varepsilon_{n}$ for all $j\ne i$. Using the last two observations in \eqref{drift-n} shows that
\begin{equation}\label{drift at boundary}
b_{i}^{(n)}(z)>0\quad \text{if }z_{i}=\varepsilon_{n},
\qquad \qquad 
b_{i}^{(n)}(z)<0\quad \text{if }z_{i}=1-(n-1)\varepsilon_{n},
\end{equation} 
so that $b_{i}^{(n)}(z)$ satisfies (\ref{boundary conditions}) restricted to $\TDn$.

In order to provide some intuition on \eqref{mut rates}, we have to consider separately the constant and the frequency-dependent term. The former attributes equal chances of mutation to all species regardless of their abundance, as in the one-parameter model. To evaluate the effect of $q_{ij}^{(n)}(z)$ as a deviation from (\ref{symm-mut}), recall now that the limit operator (\ref{operator: theta-sigma}) acts on functions defined on (\ref{nabla-infty}), where the frequencies have been ordered. The same ordering operation will be done before taking the limit of $\A_{n}$, with the formal appearance of the operator unchanged, so that it is correct to think in terms of ranked frequencies. In light of this, the term $j$ at the numerator of the second term in $q_{ij}^{(n)}(z)$  can be interpreted as an approximate indication of the size of the frequency $z_{j}$, a greater $j$ implying a lower $z_{j}$. 
Hence mutations from $i$ to $j$ occur more frequently if $z_{j}$ is relatively low, implying a redistributive effect. This has to be interpreted as a conditional mechanism, related to the probability of directing the mass of the $i$th-type individual to some species $j\ne i$, conditional on the fact that such individual mutates.
In order to evaluate the unconditional chances of mutation of $i$th-type individuals, consider now the rescaled state dependent term in $q_{ij}^{(n)}(z)$, namely
\begin{equation}\label{z part}
z_{i}^{-1}[1-\exp\{-2(z_{i}-\varepsilon_{n})/\varepsilon_{n}\}].
\end{equation} 
Recall that the range of values of $z_{i}$ is determined by $n$ through $\varepsilon_{n}$ and  grows to $[0,1]$ as $n$ diverges, and note that the clear non monotonicity of the quantity in \eqref{z part} is displayed on such range only for $n$ large enough.
\begin{figure}[t!]
\begin{center}
\includegraphics[width=.7\textwidth]{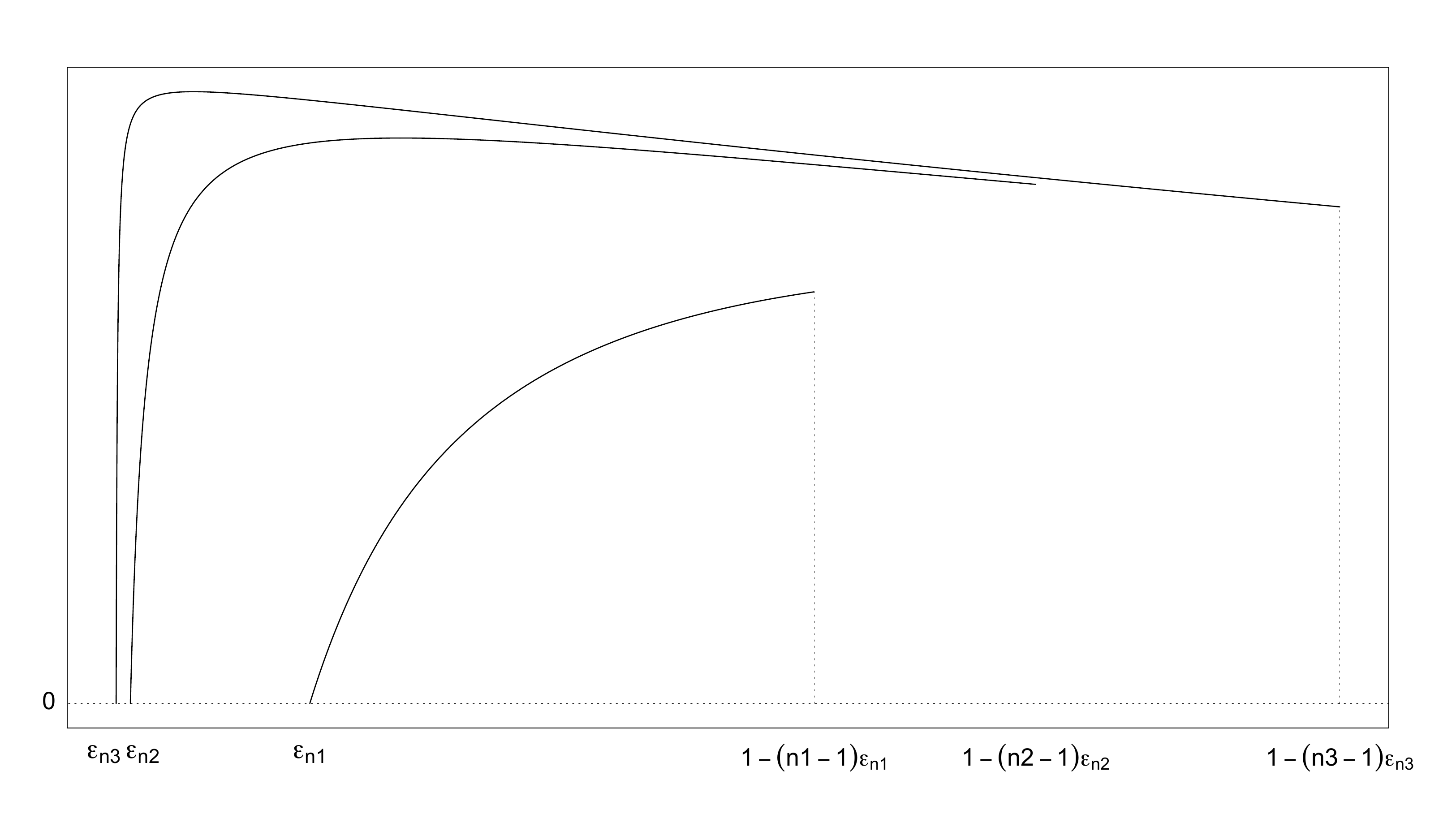}\\
\begin{quote}
\caption{\footnotesize 
Contribution of the (rescaled) state-dependent part of the mutation rate $q_{ij}^{(n)}(z)$ in (\ref{mut rates}) with respect to the rate of the one-parameter model. The plot shows 
a qualitative comparison of (\ref{z part}) as a function of $z_{i}$ with  $\varepsilon_{n}=n^{-1.1}$ and $n_{1}<n_{2}<n_{3}$.}\label{fig: rate}
\end{quote}\end{center}
\end{figure} 
Figure \ref{fig: rate} provides a qualitative comparison of (26) as a function of $z_{i}$ for $n_{1}<n_{2}<n_{3}$ and $\varepsilon_{n}=n^{-1.1}$, so that (\ref{epsilon n}) holds. The plot highlights the contribution of the rescaled state-dependent term of $q_{ij}^{(n)}(z)$ with respect to the one-parameter mutation rate \eqref{symm-mut}.
The behavior of $q_{ij}^{(n)}(z)$ for $z_i$ relatively far from $\varepsilon_{n}$ can be interpreted in terms of a reinforcement mechanism similar to that featured by the $\PD(\theta,\alpha)$ distribution (see \citealp{LMP07b}). It can indeed be observed that the probability that a further sample from (\ref{urn}) is an already observed species is not allocated proportionally to the current frequencies. The ratio of probabilities assigned by \eqref{urn} to any pair of species $(i,j)$ is $r_{i,j}=(n_{i}-\alpha)/(n_{j}-\alpha)$.
When $\alpha\rightarrow0$,
the probability of sampling species $i$ is proportional to the absolute frequency $n_{i}$, or equivalently to $z_{i}$, which in continuous time is reflected by a constant mutation rate as in (\ref{symm-mut}). However, since $r_{i,j}$ is increasing in $\alpha$, a value of $\alpha>0$ reallocates some probability mass from type $j$ to type $i$, so that, for example, for $n_{i}=2$ and $n_{j}=1$ we have $r_{i,j}=2, 3, 5$ for $\alpha=0, 0.5, 0.75$ respectively. Thus $\alpha$ has a reinforcement effect on those species that have higher frequency. 
On the other hand, the behavior of $q_{ij}^{(n)}(z)$ for $z_{i}$ near the boundary is what ultimately makes the process well-defined in a bounded region. For $z_{i}\downarrow\varepsilon_{n}$, \eqref{mut rates} converges to the rate of the one-parameter model \eqref{symm-mut}, so that the associated drift behaves locally as for the one-parameter case and $z_{i}$ is kept inside the boundary $\varepsilon_{n}>0$. Roughly speaking, the decreasing part of the rate function pushes the frequencies towards smaller values, whereas the leftmost plotted part is responsible for keeping the frequencies inside the state space.

The following result shows that the above defined operator characterizes a Feller diffusion on $\TDn$.
Let $||\cdot||$ denote the supremum norm.
\begin{theorem}\label{prop: An}
Let $\A_{n}$ be the operator defined by (\ref{operator An})-(\ref{domain-An})-(\ref{cov-n})-(\ref{drift-n}). Then the closure of $\A_{n}$ in $C(\TDn)$ is single-valued and generates a Feller semigroup $\{\T_{n}(t)\}$ on $C(\TDn)$. For each $\nu_{n}\in\P(\TDn)$, there exists a strong Markov process $Z^{(n)}(\cdot)=\left\{Z^{(n)}(t),t\ge0\right\}$ corresponding to $\{\T_{n}(t)\}$ with initial distribution $\nu_{n}$ and such that
\begin{equation*}
\mathbb{P}\{Z^{(n)}(\cdot)\in C_{\TDn}([0,\infty))\}=1.
\end{equation*} 
\end{theorem}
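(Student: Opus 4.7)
The strategy is to apply the Hille--Yosida--Ray theorem to show that the closure of $(\A_n, C^2(\TDn))$ generates a Feller semigroup on $C(\TDn)$, after which the existence of a strong Markov modification with continuous sample paths is standard. The domain $C^2(\TDn)$ is clearly dense in $C(\TDn)$, so only the positive maximum principle and the range condition need to be verified.

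First I would verify the positive maximum principle for $(\A_n, C^2(\TDn))$. Using the substitution $y_i = z_i-\varepsilon_n$, the covariance $(a^{(n)}_{ij})$ takes the form $y_i(\delta_{ij}(1-n\varepsilon_n)-y_j)$, which a Cauchy--Schwarz argument shows to be positive semi-definite on $\TDn$. Hence at any interior maximum of $f\in C^2(\TDn)$ the second-order term in $\A_n f$ is non-positive. At a boundary point where some $z_i = \varepsilon_n$, the $i$-th row and column of $(a^{(n)}_{ij})$ vanish and the tangential block is still positive semi-definite, while the inward-pointing strict positivity of $b^{(n)}_i$ established in (\ref{drift at boundary}), combined with the sign of $\partial f/\partial z_i$ at such a maximum, controls the drift contribution. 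The analogous argument applies at $z_i = 1-(n-1)\varepsilon_n$.

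Next I would verify the range condition: for some $\lambda>0$, $(\lambda-\A_n)C^2(\TDn)$ is dense in $C(\TDn)$. Here I would exploit a convenient decomposition. Under the affine map $y_i=z_i-\varepsilon_n$, the state space becomes the scaled simplex $\Delta'=\{y\in\mathbb{R}^n_{\ge 0}:\sum_i y_i = 1-n\varepsilon_n\}$, the covariance becomes the standard Wright--Fisher covariance on $\Delta'$, and the drift (\ref{drift-n}) decomposes as an affine function of $y$ plus a bounded $C^\infty$ perturbation (the terms involving $\exp\{-2y_j/\varepsilon_n\}$). The Wright--Fisher operator with affine drift on a compact simplex is well known to generate a Feller semigroup on the continuous functions (cf.~\cite{EK86}, Chapters 8 and 10, or the construction in \cite{EK81}), since the range condition there can be solved on a dense subspace of polynomials. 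Adding a bounded smooth perturbation of the drift does not destroy generation, by the bounded perturbation theorem for generators of strongly continuous semigroups.

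Putting these ingredients together, the Hille--Yosida--Ray theorem (\cite{EK86}, Theorem 4.2.2) yields that the closure of $\A_n$ is single-valued and generates a Feller semigroup $\{\T_n(t)\}$ on $C(\TDn)$. For any $\nu_n\in\P(\TDn)$, the existence of a strong Markov process $Z^{(n)}(\cdot)$ with initial distribution $\nu_n$ is then given by \cite{EK86}, Theorem 4.2.7. Sample path continuity follows because $\A_n$ is a purely local second-order differential operator on a compact set with bounded coefficients, so the associated martingale problem admits a continuous modification (e.g.~via the quadratic-variation estimates in \cite{EK86}, Section 4.2, or directly from the SDE representation of $\A_n$ on $\TDn$). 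The main technical obstacle is the range condition, which is the typical difficulty for degenerate elliptic operators on simplices; the truncation by $\varepsilon_n$ and the reduction to a bounded perturbation of a classical Wright--Fisher operator are precisely what makes it tractable here.
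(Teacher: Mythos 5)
Your verification of the positive maximum principle and your appeal to Theorems 4.2.2 and 4.2.7 of \cite{EK86} follow the same route as the paper, but the range condition --- which you rightly identify as the crux --- is where your argument has a genuine gap. The ``bounded perturbation theorem'' requires the perturbing operator to be bounded on the Banach space $C(\TDn)$. Your perturbation is the first-order operator $\sum_i c_i(z)\,\partial/\partial z_i$ with $c_i$ the exponential terms in (\ref{drift-n}); bounded coefficients do not make this a bounded operator on $C(\TDn)$, and it is not relatively bounded with respect to the Wright--Fisher part either, because the covariance (\ref{cov-n}) degenerates exactly on the boundary of $\TDn$ while some of the perturbing coefficients (e.g.\ the term $\frac{2\alpha i}{n(n+1)}\sum_{j}[1-\exp\{-2(z_j-\varepsilon_n)/\varepsilon_n\}]$) do not vanish there, so first derivatives of functions in the domain of the unperturbed generator are not controlled by its graph norm near the boundary; a Girsanov-type argument at the level of the martingale problem runs into the same obstruction. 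There is also a secondary problem with the decomposition itself: for $-\alpha<\theta\le 0$ the affine part $\frac{\theta}{n-1}(1-z_i)-\theta z_i$ points outward at $z_i=\varepsilon_n$, so the ``unperturbed'' Wright--Fisher operator with affine drift is not a Feller generator on $\TDn$ to begin with; only the full drift (\ref{drift-n}), exponential terms included, satisfies (\ref{drift at boundary}).

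The paper avoids perturbation theory altogether: it computes $\A_n z^{\sigma}$ on monomials, observes that the image of the polynomial algebra under $\A_n$ consists of polynomials together with functions of the form $e^{-C_2 z_i}z^{c}$, and deduces by polynomial approximation that the range of $\A_n$, hence of $\lambda-\A_n$ for suitable $\lambda>0$, is dense in $C(\TDn)$. To repair your argument you would need to replace the perturbation step with a direct verification of the range condition of this kind (or establish well-posedness of the martingale problem for the full operator directly). Your final step on path continuity is also lighter than what is needed: the paper invokes Remark 4.2.10 of \cite{EK86}, which requires exhibiting, for each $z_0\in\TDn$ and $\delta>0$, a function $f\in\D(\A_n)$ attaining its supremum only near $z_0$ and satisfying $\A_n f(z_0)=0$ (e.g.\ $f(z)=-C_{\delta}\sum_i(z_i-z_{0})^{4}$ up to a constant); an ``SDE representation of $\A_n$'' is not available until generation has been established.
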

\begin{proof}
It is easily seen that $\A_{n}$ satisfies the positive maximum principle on $\TDn$, that is if $f\in\D(\A_{n})$ and $z_{0}\in\TDn$ are such that $f(z_{0})=||f||\ge0$, then $\A_{n}f(z_{0})\le 0$. This is immediate in the interior of $\TDn$, while on the boundaries it follows from (\ref{drift at boundary}) and the fact that (\ref{cov-n}) vanishes at every boundary point. Denote now $z^{\sigma}=z_{1}^{\sigma_{1}}\cdots z_{n}^{\sigma_{n}}$ and $\sigma-\delta_{i}=(\sigma_{1},\ldots,\sigma_{i}-1,\ldots,\sigma_{n})$
for $\sigma_{1},\ldots,\sigma_{n}\in\mathbb{N}$. Then 
\begin{align}\label{An mapping}
\A_{n}z^{\sigma}
=&\,\frac{1}{2}\sum_{i}\sigma_{i}\bigg[\frac{\theta}{n-1}z^{\sigma-\delta_{i}}-\frac{\theta n}{n-1}z^{\sigma}
+\frac{2\alpha i}{n(n+1)}\sum_{j=1}^{n}(1-C_{1}e^{-C_{2}z_{j}})z^{\sigma-\delta_{i}}\notag\\
&\,-\alpha z^{\sigma-\delta_{i}}+\alpha C_{1}e^{-C_{2}z_{i}}z^{\sigma-\delta_{i}}\notag\\
&\,+(\sigma_{i}-1)\bigg([1-(n-1)\varepsilon_{n}]z^{\sigma-\delta_{i}}-z^{\sigma}-\varepsilon_{n}[1-(n-1)\varepsilon_{n}]z^{\sigma-2\delta_{i}}+\varepsilon_{n}z^{\sigma-\delta_{i}}\bigg)\bigg]\notag\\
&\,-\frac{1}{2}\sum_{i}\sum_{j\ne i}\sigma_{i}\sigma_{j}
\bigg[z^{\sigma}-\varepsilon_{n}z^{\sigma-\delta_{i}}-\varepsilon_{n}z^{\sigma-\delta_{j}}+\varepsilon_{n}^{2}z^{\sigma-\delta_{i}-\delta_{j}}\bigg]\notag 
\end{align}
for appropriate constants $C_{1},C_{2}$.
Letting $L_{m}$ denote the algebra of polynomials in $(z_{1},\ldots,z_{n})$ restricted to $\TDn$ with degree not greater than $m\in\mathbb{N}$, the image of $\mathcal{A}_{n}$ computed on $L_{m}$ contains functions belonging to $L_{m}$ and of type $e^{-z_{i}}z^{c}$. Since for every $g(x)\in C(K)$, with $K$ compact, and $f(x)=e^{x}g(x)\in C(K)$, there exists a sequence $\{p^{(k)}\}$ of polynomials on $K$ such that $||f-p^{(k)}||\rightarrow0$, so that $||e^{-z}p^{(k)}-g||\rightarrow0$, it follows that the image of $\mathcal{A}_{n}$ is dense in $C(\TDn)$, and so is that of $\lambda-\mathcal{A}_{n}$  for all but at most countably many $\lambda>0$. Since $\cup_{m}L_{m}$ is dense in $C(\TDn)$, the Hille-Yosida Theorem (see \citealp{EK86}, Theorem 4.2.2) now implies that the closure of $\A_{n}$ on $C(\TDn)$ is single-valued and generates a strongly continuous, positive, contraction semigroup $\{\T_{n}(t)\}$ on $C(\TDn)$. The fact that $(1,0)$ belongs to the domain of $\overline\A_{n}$ implies also that $\{\T_{n}(t)\}$ is conservative.  Note now that for every $z_{0}\in\TDn$ and $\delta>0$ there exists $f\in\D(\A_{n})$ such that
\begin{equation*}
\sup_{z\in B^{c}(z_{0},\delta)}f(z)<f(z_{0})=||f||
\qquad \text{and} \qquad 
\A_{n}f(z_{0})=0,
\end{equation*} 
where $B^{c}(z_{0},\delta)$ is a ball of radius $\delta$ centered at $z_{0}$. Take for example $f(z)=-C_{\delta}\sum_{i=1}^{n}(z_{i}-z_{0})^{4}$ for an appropriate constant $C_{\delta}$ which depends on $\delta$.
Then the second assertion follows from Theorem 4.2.7 and Remark 4.2.10 in \cite{EK86}.
\end{proof}

\begin{figure}[t!]
\begin{center}
\includegraphics[width=.5\textwidth]{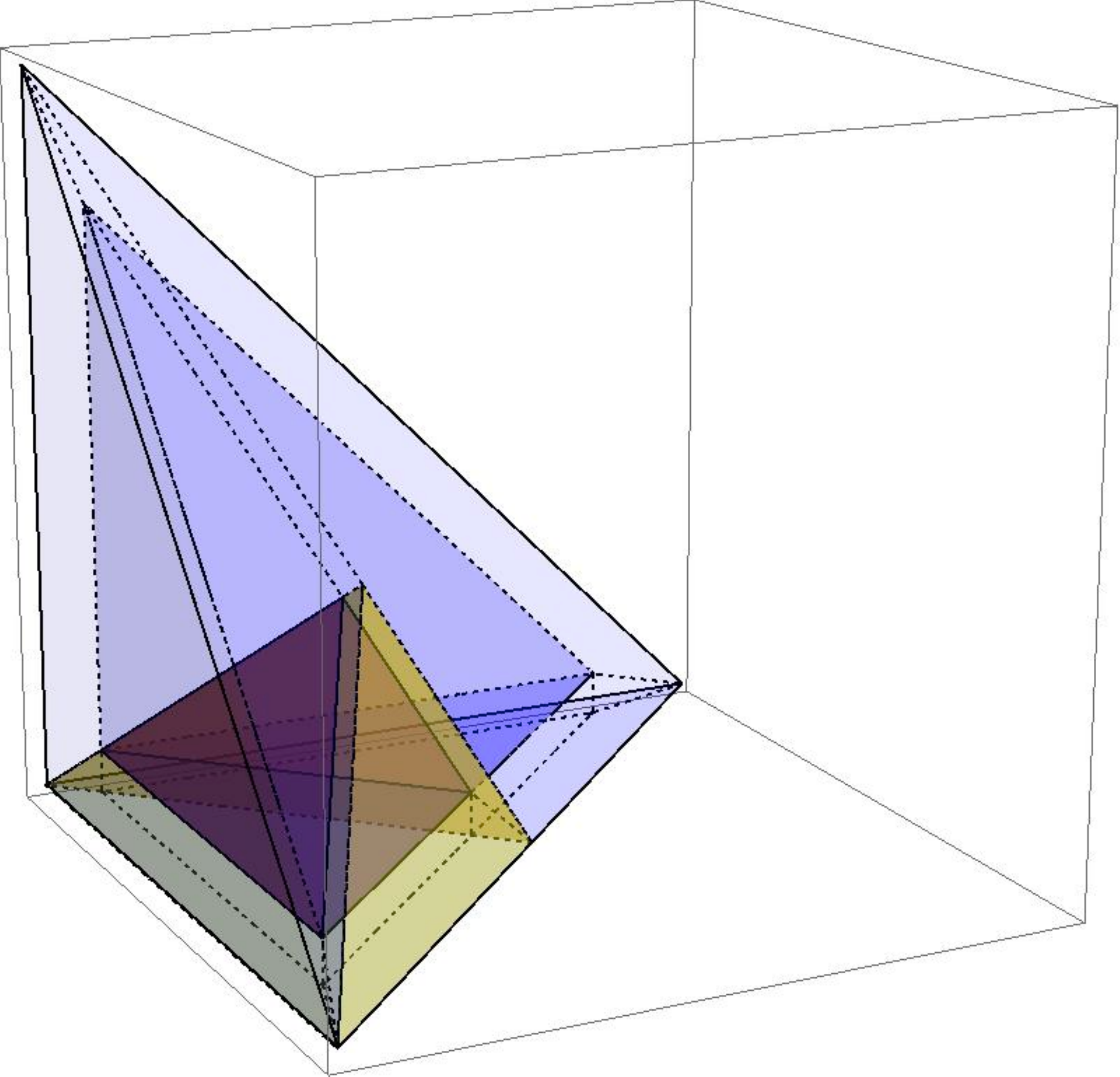}\\
\begin{quote}
\caption{\footnotesize Projection onto the first three coordinates of $\Delta_{n}$ (light blue), $\Delta_{n,\varepsilon_{n}}$ (dark blue), $\nabla_{n,0}$ (yellow), $\nabla_{n,\varepsilon_{n}}$ (red), for $n\ge3$. 
The (ordered) $n$th term of the sequence lives in the red region, so that the frequencies are bounded away from zero for all $n$. As $n$ increases, the red region converges to the yellow region, so that the limit process is left free to move in the full ordered simplex.}\label{fig: simplex}
\end{quote}\end{center}
\end{figure} 

Given $\Ni$ as in (\ref{nabla-infty}), define the subspaces 
\begin{equation}\label{nabla}
\nabla_{\infty}=\Big\{z\in\Ni:\ \sum_{i=1}^{\infty}z_{i}=1\Big\},
\end{equation}
and
\begin{equation*}
\TNn=\Big\{z\in\nabla_{\infty}:\ z_{n}\ge \varepsilon_{n}>z_{n+1}=0\Big\}.
\end{equation*}
See Figure \ref{fig: simplex}.
Define also the Borel measurable map $\rho_{n}:\Delta_{n}\rightarrow\nabla_{\infty}$ by
\begin{equation}\label{rho-n}
\rho_{n}(z)=(z_{(1)},\ldots,z_{(n)},0,0,\ldots),
\qquad \qquad z\in\Delta_{n},
\end{equation} 
where $z_{(i)}$ are the decreasing order statistics of $z\in\Delta_{n}$. It is clear that $\rho_{n}$ maps $\TDn$ into $\TNn$. If $Z^{(n)}(\cdot)$ is the Markov process of Theorem \ref{prop: An}, our aim is thus to show that 
\begin{equation}\label{weak convergence}
\rho_{n}(Z^{(n)}(\cdot))\Rightarrow Z(\cdot)
\end{equation} 
in the sense of convergence in distribution in $C_{\Ni}([0,\infty))$ as $n\rightarrow\infty$, where $Z(\cdot)$ is the diffusion process corresponding to the operator (\ref{operator: theta-sigma}) with an appropriate domain. 
To this end, consider the symmetric polynomials
\begin{equation}\label{symm polyn}
\varphi_{m}(z)=\sum_{i\ge1}z_{i}^{m},\quad \quad z\in\Ni,\ m\ge2,
\end{equation} 
and define
\begin{align}\label{domain B}
\D(\B)
=\Big\{&\text{subalgebra of } C(\Ni)\text{ generated by }
1,\varphi_{3}(z),\varphi_{4}(z),\ldots
\Big\}.
\end{align}

\begin{lemma}\label{lemma: subalgebra dense}
$\D(\B)$ is dense in $C(\Ni)$.
\end{lemma}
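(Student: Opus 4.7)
The plan is to invoke the Stone--Weierstrass theorem on the compact metric space $\Ni$, which is a closed subset of $[0,1]^{\mathbb{N}}$ in the product topology and is therefore compact. By construction $\D(\B)$ is an algebra of functions on $\Ni$ containing the constant $1$, and each generator $\varphi_m$ with $m\ge 3$ is continuous: on $\Ni$ the ordering and the bound $\sum_i z_i\le 1$ force $z_i\le 1/i$, hence $z_i^m\le i^{-m}$, so dominated convergence transfers coordinatewise convergence of a sequence in $\Ni$ to convergence of the series $\sum_i z_i^m$. Granted this, the only nontrivial hypothesis that remains is to show that $\D(\B)$ separates the points of $\Ni$.

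To this end I would show that the family $(\varphi_m)_{m\ge 3}$ alone already separates points. Fix $z\in\Ni$ with $z_1>0$ and let $k_1=\#\{i:z_i=z_1\}$, which is finite since $k_1 z_1\le 1$. Then, as $m\to\infty$,
\begin{equation*}
\frac{\varphi_m(z)}{z_1^m}=k_1+\sum_{i:\,z_i<z_1}\Big(\frac{z_i}{z_1}\Big)^m\longrightarrow k_1,\qquad \varphi_m(z)^{1/m}\longrightarrow z_1,
\end{equation*}
so the asymptotics of $(\varphi_m(z))_{m\ge 3}$ determine the pair $(z_1,k_1)$. Subtracting $k_1 z_1^m$ from $\varphi_m(z)$ then yields the $m$-th power sum of the tail $(z_{k_1+1},z_{k_1+2},\ldots)$, and iterating this step recovers every remaining coordinate; the degenerate case $z_1=0$ forces $z=0$ unambiguously. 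Therefore $\varphi_m(z)=\varphi_m(w)$ for every $m\ge 3$ implies $z=w$, which is exactly the required separation property.

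Combining these ingredients, Stone--Weierstrass yields that the uniform closure of $\D(\B)$ is all of $C(\Ni)$. The only mildly delicate aspect I anticipate is that the generators start at $\varphi_3$, with neither $\varphi_1$ nor $\varphi_2$ available; however, because the reconstruction above relies solely on the tail $m\to\infty$, discarding finitely many initial power sums is harmless. Continuity of $\varphi_m$ is similarly routine given the bound $z_i\le 1/i$, so no genuine obstacle is expected.
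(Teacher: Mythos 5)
Your proof is correct, but it takes a genuinely different route from the paper. The paper does not argue from scratch: it quotes from \cite{EK81} (proof of Theorem 2.5) that the larger algebra $\D_{0}(\B)$ generated by $1,\varphi_{2},\varphi_{3},\ldots$ is dense in $C(\Ni)$, and then only has to show that $\varphi_{2}$ lies in the uniform closure of $\D(\B)$; it does so by recovering $z_{1},z_{2},\ldots$ as limits $z_{1}=\lim_{m}\varphi_{m}(z)^{1/m}$, $z_{2}=\lim_{m}(\varphi_{m}(z)-z_{1}^{m})^{1/m}$, etc., and summing squares. You instead run Stone--Weierstrass directly on the compact space $\Ni$, using essentially the same coordinate-recovery idea --- the tail asymptotics of the power sums determine the ordered point --- but only at the level of separating points. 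This is a real advantage in rigor: separation of points is a pointwise statement, whereas the paper's step ``$\varphi_{2}\in\overline{\D(\B)}$'' implicitly requires the convergence $\varphi_{m}(z)^{2/m}+(\varphi_{m}(z)-z_{1}^{m})^{2/m}+\cdots\to\varphi_{2}(z)$ to be \emph{uniform} on $\Ni$, a point the paper leaves unaddressed. The trade-off is that your argument re-proves what the citation to \cite{EK81} supplies for free (compactness of $\Ni$, continuity of the $\varphi_{m}$ via $z_{i}\le 1/i$, and the algebra/constants hypotheses), so it is longer but self-contained; your handling of multiplicities $k_{1}=\#\{i:z_{i}=z_{1}\}$ and of the degenerate case $z_{1}=0$ is sound, and discarding $\varphi_{1},\varphi_{2}$ is indeed harmless since only the $m\to\infty$ behavior is used.
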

\begin{proof}
In \cite{EK81} (see proof of Theorem 2.5) it is proved that the closure of $\D_{0}(\B)$ defined as
\begin{equation}\label{domain B con phi-2}
\D_{0}(\B)
=\Big\{\text{subalgebra of } C(\Ni)\text{ generated by }
1,\varphi_{2}(z),\varphi_{3}(z),\ldots
\Big\}
\end{equation} 
equals $C(\Ni)$. Note now that 
\begin{equation*}
z_{1}=\lim_{m\rightarrow\infty}\varphi_{m}(z)^{1/m},\quad 
z_{2}=\lim_{m\rightarrow\infty}(\varphi_{m}(z)-z_{1}^{m})^{1/m},\quad \ldots
\end{equation*} 
from which
\begin{equation*}
\varphi_{2}(z)=\lim_{m\rightarrow\infty}\varphi_{m}(z)^{2/m}+(\varphi_{m}(z)-z_{1}^{m})^{2/m}+\dots
\end{equation*} 
so that $\varphi_{2}\in\overline{\D(\B)}$. It follows that $\overline{\D_{0}(\B)}\equiv \overline{\D(\B)}$, from which the result follows.
\end{proof}

Before providing the convergence argument, we recall the relevant theorems about the formal existence and the sample path properties of the process $Z(\cdot)$ appearing in (\ref{weak convergence}).

\begin{theorem}\emph{[\citealp*{P09}]}\label{prop: petrov}
Let $\B$ be the operator (\ref{operator: theta-sigma}) with domain (\ref{domain B con phi-2}). The closure of $\B$ in $C(\Ni)$ generates a Feller semigroup $\{\T(t)\}$ on $C(\Ni)$, and for each $\nu\in\P(\Ni)$ there exists a strong Markov process $Z(\cdot)=\left\{Z(t),t\ge0\right\}$ corresponding to $\{\T(t)\}$ with initial distribution $\nu$ and such that
\begin{equation*}
\mathbb{P}\{Z(\cdot)\in C_{\Ni}([0,\infty))\}=1.
\end{equation*} 
\end{theorem}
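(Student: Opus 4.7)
The plan is to apply the Hille--Yosida theorem, in the form of Theorem 4.2.2 of \cite{EK86}, to the pair $(\B,\D_0(\B))$: this gives that the closure $\overline\B$ generates a strongly continuous, positive, contraction semigroup $\{\T(t)\}$ on $C(\Ni)$. Conservativity, and hence the Feller property, follows from $1\in\D_0(\B)$ together with $\B 1=0$. Theorem 4.2.7 of \cite{EK86} then yields a strong Markov process $Z(\cdot)$ corresponding to $\{\T(t)\}$ with c\`adl\`ag sample paths, and the upgrade to continuous paths comes from Remark 4.2.10 by exhibiting, for each $z_0\in\Ni$ and $\delta>0$, a function $f\in\D_0(\B)$ with $f(z_0)=\|f\|$, $\sup_{z\notin B(z_0,\delta)}f(z)<f(z_0)$ and $\B f(z_0)=0$; such an $f$ can be chosen as a suitable symmetric polynomial maximized at $z_0$, which works because $\B$ is a local (second-order differential) operator.

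To verify the three Hille--Yosida hypotheses I would proceed as follows. Density of $\D_0(\B)$ in $C(\Ni)$ is precisely Lemma~\ref{lemma: subalgebra dense} above. The positive maximum principle is routine in the interior of $\Ni$, since the Wright--Fisher covariance matrix $(z_i(\delta_{ij}-z_j))$ is positive semidefinite while at an interior maximum the gradient of $f$ vanishes and the Hessian is negative semidefinite; on the boundary one uses that the covariance degenerates in the normal directions to each active face of $\Ni$ and that the drift $-\tfrac12(\theta z_i+\alpha)$ is compatible with the constraints $z_i\ge 0$ and $\sum_i z_i\le 1$. The range condition is the most computational step: direct calculation gives
\[
\B\varphi_m=\frac{m}{2}\bigl[(m-1-\alpha)\varphi_{m-1}-(m-1+\theta)\varphi_m\bigr],\qquad m\ge 2,
\]
and combining this with the Leibniz identity $\B(fg)=(\B f)g+f(\B g)+\Gamma(f,g)$ for the carr\'e du champ $\Gamma(\varphi_m,\varphi_n)=mn(\varphi_{m+n-1}-\varphi_m\varphi_n)$ shows that $\B$ preserves the filtration of $\D_0(\B)$ by total polynomial degree, acting lower-triangularly in the monomial basis $\varphi_{m_1}\cdots\varphi_{m_k}$ with diagonal entries $-\tfrac12\sum_j m_j(m_j-1+\theta)\le 0$ (since $\theta>-\alpha>-1$). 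Consequently, on each finite-dimensional subspace $V_d\subset\D_0(\B)$ of polynomials of degree $\le d$, the operator $\lambda-\B|_{V_d}$ is invertible for every $\lambda>0$, so $R(\lambda-\B)\supseteq\bigcup_d V_d=\D_0(\B)$, which is dense by the first hypothesis.

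The main obstacle lies partly in the positive maximum principle on the boundary of $\Ni$, whose geometry is complicated by the coexistence of the ordering constraint $z_1\ge z_2\ge\cdots$ with $\sum_i z_i\le 1$, and partly in making the triangular argument fully rigorous: although $\B$ preserves the polynomial degree, $\B\varphi_2$ involves $\varphi_1\notin\D_0(\B)$ (even though $\varphi_1\in\overline{\D_0(\B)}$ by Lemma~\ref{lemma: subalgebra dense}), so the argument must be carried out in $C(\Ni)$ rather than inside $\D_0(\B)$ itself, and the eigenvalues must be read off the top-degree part of $\B$ modulo lower-degree corrections coming from both the drift and the carr\'e du champ.
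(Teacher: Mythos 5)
A preliminary point: the paper offers no proof of this statement — it is imported verbatim from Petrov (2009) — so your proposal can only be measured against that source and against the paper's proof of the finite-dimensional analogue (Theorem \ref{prop: An}), both of which do follow the Hille--Yosida route you outline. Your architecture (density, positive maximum principle, range condition via triangularity, then Theorem 4.2.7 and Remark 4.2.10 of Ethier--Kurtz for the process and its path continuity) is the right one, and your formulas for $\B\varphi_m$ and for the carr\'e du champ are correct. But the crux of your argument, the positive maximum principle, does not work as sketched: $\Ni$ has empty interior in $[0,1]^{\infty}$ and its ``boundary'' is not a finite union of faces, so the interior/boundary calculus you describe never gets off the ground. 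This is precisely why neither Ethier--Kurtz (1981) nor Petrov (2009) verifies the PMP directly; both instead obtain dissipativity of $\B$ on its core by exhibiting it as a uniform limit $\|\A_{n}\pi_{n}f-\pi_{n}\B f\|\to0$ of operators already known to satisfy the PMP (finite-dimensional Wright--Fisher generators, resp.\ up--down chain generators on the Kingman graph) — exactly the kind of estimate the present paper establishes in the proof of Theorem \ref{theorem: convergence}. You honestly flag this as ``the main obstacle,'' but the proposal contains no mechanism for overcoming it.

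The $\varphi_{1}$ issue is also more serious than you allow, and your parenthetical repair is false: $\varphi_{1}(z)=\sum_{i}z_{i}$ is \emph{not} continuous on $\Ni$ (take $z^{(n)}$ with $n$ coordinates equal to $1/n$, converging coordinatewise to $0$ while $\varphi_{1}\equiv1$), so it cannot lie in $\overline{\D_{0}(\B)}\subset C(\Ni)$; Lemma \ref{lemma: subalgebra dense} concerns $\varphi_{2}$ and the algebra generated by $\varphi_{3},\varphi_{4},\dots$ and says nothing of the sort. The necessary fix is definitional: the differential expression is evaluated on $\nabla_{\infty}$, where $\varphi_{1}\equiv1$, so that $\B\varphi_{2}$ is \emph{defined} as $(1-\alpha)-(1+\theta)\varphi_{2}$; only with this convention does $\B$ map each graded piece $V_{d}$ into itself, which is what your range-condition argument needs. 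Relatedly, your diagonal entries are wrong: since $\Gamma(\varphi_{m},\varphi_{q})$ contributes $-mq\,\varphi_{m}\varphi_{q}$ at top degree, the eigenvalue attached to a monomial of total degree $d=\sum_{j}m_{j}$ is $-\tfrac12 d(d-1+\theta)$, not $-\tfrac12\sum_{j}m_{j}(m_{j}-1+\theta)$ (the conclusion — strict negativity for $d\ge2$, hence invertibility of $\lambda-\B|_{V_{d}}$ for $\lambda>0$ — survives). Finally, the localizing function for path continuity cannot be an arbitrary ``symmetric polynomial maximized at $z_{0}$'': to get a strict maximum over $B^{c}(z_{0},\delta)$ together with $\B f(z_{0})=0$ one needs something like $f=C-\sum_{m\in F}c_{m}(\varphi_{m}-\varphi_{m}(z_{0}))^{4}$, with $F$ finite chosen by compactness using that $\{\varphi_{m}\}_{m\ge2}$ separate points of $\Ni$; the fourth power, not locality of $\B$, is what kills the carr\'e du champ term at $z_{0}$.
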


Let $\nabla_{\infty}$ be as in (\ref{nabla}). The following result shows that if the initial distribution of the Markov process $Z(\cdot)$ of Theorem \ref{prop: petrov} is $\PD(\theta,\alpha)$, then the law of the process is concentrated on $C_{\nabla_{\infty}}([0,\infty))$.

\begin{theorem}\emph{[\citealp*{FS10}]}\label{theorem FS10}
Let $Z(\cdot)=\left\{Z(t),t\ge0\right\}$ be the Markov process of Theorem \ref{prop: petrov}, and assume $Z(0)\sim \PD(\theta,\alpha)$. Then 
\begin{equation*}
\mathbb{P}\{Z(t)\in \nabla_{\infty},\ \forall t\ge0\}=1.
\end{equation*} 
\end{theorem}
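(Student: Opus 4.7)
My plan is to combine the stationarity of $\PD(\theta,\alpha)$ under the semigroup $\{\T(t)\}$ of Theorem \ref{prop: petrov} with the sample-path continuity guaranteed there, via a lower semi-continuity argument for the mass functional $\varphi_{1}(z):=\sum_{i\ge 1}z_{i}$ on $\Ni$. Since $\PD(\theta,\alpha)$ is the invariant distribution of $\{\T(t)\}$ and $Z(0)\sim\PD(\theta,\alpha)$, the one-dimensional marginals of $Z(\cdot)$ are preserved: $Z(t)\sim\PD(\theta,\alpha)$ for every fixed $t\ge 0$. Because $\PD(\theta,\alpha)$ charges only $\nabla_{\infty}$, this gives $\mathbb{P}\{Z(t)\in\nabla_{\infty}\}=1$ for each $t$. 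Fubini applied to the product measure on $\Omega\times[0,\infty)$ then yields that almost surely the random set $B=\{t\ge 0:Z(t)\notin\nabla_{\infty}\}$ has Lebesgue measure zero.

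The upgrade from this almost-everywhere statement to a pathwise one rests on the observation that $\varphi_{1}$ is lower semi-continuous on $\Ni$ equipped with the product topology, being the pointwise supremum over $N$ of the continuous truncations $z\mapsto\sum_{i=1}^{N}z_{i}$. Composing with the continuous path $t\mapsto Z(t)$ furnished by Theorem \ref{prop: petrov}, $t\mapsto\varphi_{1}(Z(t))$ is lower semi-continuous on $[0,\infty)$, so $B=\{t\ge 0:\varphi_{1}(Z(t))<1\}$ is open in $[0,\infty)$. Any non-empty open subset of $[0,\infty)$ contains a non-degenerate interval and therefore has strictly positive Lebesgue measure; combined with $\mathrm{Leb}(B)=0$ a.s., this forces $B=\emptyset$ almost surely, which is exactly the assertion.

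The substantive content of the argument lies in the direction of the semi-continuity: in the product topology on $\Ni$, mass can only be lost, never gained, in the limit (by Fatou applied coordinatewise), so $\varphi_{1}$ is lower rather than upper semi-continuous. This asymmetry is what makes the exceptional set $B$ open rather than closed, and is precisely what allows the Fubini null-set statement to promote to an everywhere statement for free; had it gone the other way, $B$ could conceivably be a Cantor-like closed set of measure zero and the extension would have required heavier machinery such as a Dirichlet-form / capacity argument. Otherwise, the heavy lifting is entirely delegated to the invariance of $\PD(\theta,\alpha)$ under $\{\T(t)\}$ and to the path regularity given in Theorem \ref{prop: petrov}.
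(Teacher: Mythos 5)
First, a remark on the comparison: the paper does not actually prove this statement --- it is imported verbatim from \cite{FS10} --- so your argument has to stand on its own. Its first two steps do: stationarity of $\PD(\theta,\alpha)$ gives $\mathbb{P}\{Z(t)\in\nabla_{\infty}\}=1$ for each fixed $t$, and Fubini makes the exceptional set $B=\{t\ge0:\varphi_{1}(Z(t))<1\}$ Lebesgue-null almost surely. The gap is in the upgrade to a pathwise statement. You correctly identify $\varphi_{1}$ as lower semi-continuous on $\Ni$, but you then draw the wrong topological conclusion: for a lower semi-continuous function $g$ it is the strict \emph{superlevel} sets $\{g>c\}$ that are open, whereas your $B=\{g<1\}$ is a strict \emph{sublevel} set, which is only an $F_{\sigma}$ (the complement of $\{g\ge1\}=\bigcap_{n}\{g>1-1/n\}$, a $G_{\delta}$). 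Openness of $\{g<1\}$ would require \emph{upper} semi-continuity, which $\varphi_{1}$ does not enjoy on $\Ni$: mass escapes to infinity, e.g.\ $(1/n,\dots,1/n,0,\dots)\to(0,0,\dots)$ coordinatewise while $\varphi_{1}$ drops from $1$ to $0$. A minimal counterexample to your key step: $g(t)=1$ for $t\ne t_{0}$ and $g(t_{0})=0$ is lower semi-continuous and equals $1$ almost everywhere, yet $B=\{t_{0}\}\ne\emptyset$.

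So the ``asymmetry'' you invoke in your closing paragraph points in exactly the opposite direction from the one you need. Lower semi-continuity of the mass functional means the value at a limit time can be strictly smaller than at nearby times; that is precisely the scenario in which a continuous $\Ni$-valued path momentarily exits $\nabla_{\infty}$ on a closed Lebesgue-null set of times, and nothing in your argument excludes it --- this is the Cantor-like possibility you dismiss, and it is the live one. This is also why the statement is genuinely hard and why the paper cites it rather than proving it: the argument in \cite{FS10} does not follow from stationarity plus soft topology, but requires quantitative control of the process near $\Ni\setminus\nabla_{\infty}$. Repairing your proof would amount to showing that $t\mapsto\varphi_{1}(Z(t))$ is upper semi-continuous along sample paths, i.e.\ that no mass escapes to the tail at any single time, and that is essentially the content of the theorem itself.
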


Denote now by $\B_{n}$ the right hand side of (\ref{operator An}), with coefficients as in (\ref{cov-n}) and (\ref{drift-n}) but domain 
\begin{equation}\label{domain-Bn}\notag
\D(\B_{n})=\left\{f\in C(\TNn):\ f\circ\rho_{n}\in C^{2}(\TDn)\right\}.
\end{equation} 
We are now ready to state the main result of the section.

\begin{theorem}\label{theorem: convergence}
Let $Z^{(n)}(\cdot)$ and $Z(\cdot)$ be the Markov processes of Theorem \ref{prop: An} and Theorem \ref{prop: petrov}, with initial distribution $\nu_{n}\in\P(\TDn)$ and $\nu\in\P(\Ni)$, respectively. If $\nu_{n}\circ\rho_{n}^{-1}\Rightarrow \nu$, then (\ref{weak convergence}) holds in $C_{\Ni}([0,\infty))$. If in addition $\nu$ is $\PD(\theta,\alpha)$, then (\ref{weak convergence}) holds in $C_{\nabla_{\infty}}([0,\infty))$.
\end{theorem}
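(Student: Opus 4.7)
The plan is to apply the convergence-of-semigroups framework from Chapter 4 of \cite{EK86}. First, I would argue that the algebra $\D(\B)$ from (\ref{domain B}) is a core for the Feller generator $\overline{\B}$: by Theorem \ref{prop: petrov} the closure of $\B$ on the larger algebra $\D_0(\B)$ in (\ref{domain B con phi-2}) is this generator, and the identity $\varphi_2\in\overline{\D(\B)}$ established in the proof of Lemma \ref{lemma: subalgebra dense} lets one promote $\D(\B)$ to a core as well. Hence the whole convergence reduces to verifying
\begin{equation*}
\lim_{n\to\infty} \|\B_n f - \B f\|_{\TNn} = 0 \qquad \text{for every } f\in\D(\B),
\end{equation*}
after which Theorem 1.6.5 and Theorem 4.2.6 of \cite{EK86}, combined with the Feller property of each $Z^{(n)}$ given by Theorem \ref{prop: An}, yield weak convergence of $\rho_n(Z^{(n)}(\cdot))$ to $Z(\cdot)$ in $D_{\Ni}([0,\infty))$; the upgrade to $C_{\Ni}([0,\infty))$ then follows because the limit has continuous paths by Theorem \ref{prop: petrov}, via the same argument used at the end of the proof of Theorem \ref{prop: alpha-div-conv-2par}.

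The bulk of the work is the generator convergence on the symmetric power sums $\varphi_m$, $m\ge 3$, where only diagonal second derivatives contribute. The diffusive part
\begin{equation*}
\tfrac{m(m-1)}{2}\sum_{i=1}^n(z_i-\varepsilon_n)\bigl(1-(n-1)\varepsilon_n-z_i\bigr)z_i^{m-2}
\end{equation*}
differs from $\tfrac{m(m-1)}{2}(\varphi_{m-1}-\varphi_m)$ by $O(n\varepsilon_n)$, using $\sum_i z_i^{m-2}\le 1$ (which holds on $\Ni$ for $m\ge 3$ since $z_i\le 1$) and the assumption $n\varepsilon_n=o(1)$ in (\ref{epsilon n}). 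For the drift (\ref{drift-n}) there are four contributions: the piece $\tfrac{\theta}{n-1}(1-z_i)$ sums to $O(1/n)$; the piece $-\theta z_i$ yields $-\tfrac{m\theta}{2}\varphi_m$; the exponential correction $-\alpha[1-e^{-2(z_i-\varepsilon_n)/\varepsilon_n}]$ yields $-\tfrac{m\alpha}{2}\varphi_{m-1}$, where the stray exponential is controlled by the uniform maximization
\begin{equation*}
\sup_{x\ge\varepsilon_n} x^{m-1}e^{-2(x-\varepsilon_n)/\varepsilon_n}\le C_m\varepsilon_n^{m-1},
\end{equation*}
so that $\sum_i z_i^{m-1}e^{-2(z_i-\varepsilon_n)/\varepsilon_n}\le C_m n\varepsilon_n^{m-1}=o(1)$; finally, the rank-dependent piece $\tfrac{2\alpha i}{n(n+1)}\sum_j[1-e^{-2(z_j-\varepsilon_n)/\varepsilon_n}]$ contributes at most $\tfrac{2\alpha}{n(n+1)}\cdot n\cdot\sum_i i z_i^{m-1}$, which is $O(1/n)$ once one observes that on the ordered simplex $\TNn\subset\nabla_\infty$ the elementary bound $iz_i\le 1$ gives $\sum_i i z_i^{m-1}\le\sum_i z_i^{m-2}\le 1$ for $m\ge 3$. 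Summing the four contributions recovers $\B\varphi_m=\tfrac{m}{2}(m-1-\alpha)\varphi_{m-1}-\tfrac{m}{2}(m-1+\theta)\varphi_m$.

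The extension to a general polynomial $f\in\D(\B)$ proceeds by induction on the total degree via the Leibniz-type identity
\begin{equation*}
\B_n(fg)=f\B_n g+g\B_n f+\sum_{i,j=1}^{n} a_{ij}^{(n)}(z)\,\partial_i f\,\partial_j g,
\end{equation*}
together with the analogous identity for $\B$; the carré-du-champ remainders converge uniformly because $a_{ij}^{(n)}-a_{ij}=O(n\varepsilon_n)$ on $\TNn$ and the derivatives $\partial_i\varphi_m=m z_i^{m-1}$ inherit the same summability bounds used above, so products of such derivatives have uniformly convergent contractions. Finally, when $\nu=\PD(\theta,\alpha)$, Theorem \ref{theorem FS10} says the limit measure is concentrated on $C_{\nabla_\infty}([0,\infty))$; since $\rho_n(Z^{(n)}(t))\in\TNn\subset\nabla_\infty$ for every $t\ge 0$ and $n$, the weak convergence in $C_{\Ni}([0,\infty))$ automatically takes place in $C_{\nabla_\infty}([0,\infty))$ by a standard restriction argument (see \cite{B68}, Section 5). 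The main obstacle I expect is the uniform control of the rank-dependent drift piece: the whole argument only goes through because projection via $\rho_n$ onto the ordered simplex gives access to the order-statistic bound $z_i\le 1/i$, which tames what would otherwise be an obstructive $\tfrac{2\alpha i}{n(n+1)}$ factor.
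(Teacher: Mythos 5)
Your proposal follows essentially the same route as the paper's proof: uniform convergence of the generators on the algebra generated by $\{\varphi_m\}_{m\ge 3}$, with the rank-dependent drift tamed by the order-statistic bound $iz_i\le 1$ on the ordered simplex, the exponential boundary term controlled by its maximization near $z_i=\varepsilon_n$ (giving an $O(n\varepsilon_n^{2})$ contribution), and the diffusion-coefficient discrepancy absorbed into an $O(n\varepsilon_n)$ term, followed by the Ethier--Kurtz convergence machinery, the Skorohod-to-uniform upgrade, and Theorem \ref{theorem FS10} for the restriction to $\nabla_\infty$. The only cosmetic differences are that you compute $\mathcal{B}\varphi_m$ explicitly and extend to products of power sums via a Leibniz/carr\'e-du-champ identity, whereas the paper bounds $\sum_i|\partial_i f|$ and $\sum_{i,j}|\partial^2_{ij}f|$ directly for general products, and you invoke Theorems 1.6.5 and 4.2.6 of \cite{EK86} where the paper uses Corollary 4.8.7.
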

\begin{proof}
For $\rho_{n}$ as in (\ref{rho-n}), define $\pi_{n}:C(\Ni)\rightarrow C(\Delta_{n})$ by $\pi_{n}f=f\circ\rho_{n}$ and note that $\pi_{n}:\D(\B)\rightarrow\D(\A_{n})$. Since
for every $f\in\D(\B)$ we have $\pi_{n}\B f=\B(f\circ\rho_{n})$, 
for all such functions and $z\in\TDn$ we have 
\begin{align*}
\mathcal{A}_{n}&\pi_{n}f(z)-\pi_{n}\mathcal{B}f(z)=\\
=&\,\frac{1}{2}\sum_{i,j=1}^{n}
\Big[a_{ij}^{(n)}(z)-z_{i}(\delta_{ij}-z_{j})\Big]
\frac{\partial^{2}f(\rho_{n}(z))}{\partial z_{i}\partial z_{j}}
+\sum_{i=1}^{n}
\Big[b_{i}^{(n)}(z)+\theta z_{i}+\alpha\Big]
\frac{\partial f(\rho_{n}(z))}{\partial z_{i}}.
\end{align*}
It can be easily verified that the absolute value of the first term in square brackets is bounded above by a term of order $O(n\varepsilon_{n})$, and that 
\begin{equation}\label{gener inequ}
\begin{split}
|\mathcal{A}_{n}&\pi_{n}f(z)-\pi_{n}\mathcal{B}f(z)|\\
&\le O(n\varepsilon_{n})\sum_{i,j=1}^{n}\bigg|\frac{\partial^{2} f(\rho_{n}(z))}{\partial z_{i}\partial z_{j}}\bigg|
+O(n^{-1})\sum_{i=1}^{n}\bigg|\frac{\partial f(\rho_{n}(z))}{\partial z_{i}}\bigg|\\
&\,\quad +O(n^{-1})\sum_{i=1}^{n}i
\bigg|\frac{\partial f(\rho_{n}(z))}{\partial z_{i}}\bigg|
+\alpha\sum_{i=1}^{n}\exp\left\{-2(z_{i}-\varepsilon_{n})/\varepsilon_{n}\right\}\bigg|\frac{\partial f(\rho_{n}(z))}{\partial z_{i}}\bigg|.
\end{split}
\end{equation}
Observe now that for $f\in\mathscr{D}(\mathcal{B})$ of type $\varphi_{m_{1}}\times\cdots\times\varphi_{m_{k}}$, we have $f(\rho_{n}(z))=f(z)$ and
\begin{align}\label{sum of derivatives}
\sum_{i=1}^{n}\left|\frac{\partial f(z)}{\partial z_{i}}\right|
=&\, \sum_{i=1}^{n}\sum_{j=1}^{k}m_{j}z_{i}^{m_{j}-1}\prod_{h\ne j}\varphi_{m_{h}}
\le \sum_{j=1}^{k}m_{j}\sum_{i=1}^{n}z_{i}^{m_{j}-1},
\end{align}
which is bounded above by $\sum_{j=1}^{k}m_{j}<\infty$. Let $m_{j}=3$ (cf.~\eqref{domain B}). Then \eqref{sum of derivatives} implies
\begin{align*}
\sum_{i=1}^{n}&\exp\left\{-2(z_{i}-\varepsilon_{n})/\varepsilon_{n}\right\}\bigg|\frac{\partial f(z)}{\partial z_{i}}\bigg|
\le 
n\varepsilon_{n}^{2}\sum_{j=1}^{k}m_{j}\rightarrow0
\end{align*}
uniformly as $n\rightarrow\infty$, where we have used (\ref{epsilon n}) and the fact that, for $f$ as above, the left hand side is maximized when $z_{i}=\varepsilon_{n}$.
Furthermore,
\begin{align*}
O(n^{-1})\sum_{i=1}^{n}i\bigg|\frac{\partial f(z)}{\partial z_{i}}\bigg|
\le O(n^{-1})\sum_{j=1}^{k}m_{j}\sum_{i=1}^{n}iz_{i}^{m_{j}-1}\to0
\end{align*}
for $m_{j}\ge3$ since $z_{i}\le i^{-1}$.
Finally
\begin{equation}\label{second partials}
\begin{split}
\sum_{i,j=1}^{n}&\,\left|\frac{\partial^{2} f(z)}{\partial z_{i}\partial z_{j}}\right|
\le \sum_{i,j=1}^{\infty}\Bigg[\partial_{ij}\varphi_{m_{h}}
\prod_{\ell\ne h} \varphi_{m_{\ell}}+\sum_{q\ne h}\partial_{i}\varphi_{m_{h}}\partial_{j}\varphi_{m_{q}}\prod_{\ell\ne h,q} \varphi_{m_{\ell}}\Bigg]\\
=&\,  \Bigg[m_{h}(m_{h}-1)\varphi_{m_{h}-2}
\prod_{\ell\ne h} \varphi_{m_{\ell}}
+\sum_{q\ne h}m_{h}m_{q}\varphi_{m_{h}+m_{q}-2}\prod_{\ell\ne h,q} \varphi_{m_{\ell}}\\
&\,+\sum_{q\ne h}m_{h}m_{q}\varphi_{m_{h}-1}\varphi_{m_{q}-1}\prod_{\ell\ne h,q} \varphi_{m_{\ell}}
\Bigg]\notag\\
\le &\,
\Bigg[m_{h}(m_{h}-1)+\sum_{q\ne h}m_{h}m_{q}+\sum_{q\ne h}m_{h}m_{q}
\Bigg]
\end{split}
\end{equation}
whose right hand side is bounded. From \eqref{gener inequ}, the above arguments and (\ref{epsilon n}) imply that 
\begin{equation}\label{eq:Bn-convergence}
||\mathcal{A}_{n}\pi_{n}f-\pi_{n}\mathcal{B}f||\longrightarrow0,\notag
\qquad\qquad f\in\D(\B).
\end{equation}
Given now that $\rho_{n}(Z^{(n)})$ clearly satisfies a compact containment condition (cf.~\citealp{EK86}, Remark 3.7.3), and that Theorem \ref{prop: petrov} implies that the closure of $\mathcal{B}$ in $C_{\Ni}([0,\infty))$ generates a strongly continuous contraction semigroup, 
the hypotheses of Corollary 4.8.7 in \cite{EK86} are satisfied and 
the first assertion of the Theorem follows with $C_{\Ni}([0,\infty))$ replaced by $D_{\Ni}([0,\infty))$, the space of \emph{c\`adl\`ag} functions from $[0,\infty)$ to $\Ni$.
Furthermore, the convergence holds in $C_{\overline\nabla_{\infty}}([0,\infty))\subset D_{\overline\nabla_{\infty}}([0,\infty))$, since the limit probability measure is concentrated on $C_{\overline\nabla_{\infty}}([0,\infty))$ and the Skorohod topology relativized to $C_{\overline\nabla_{\infty}}([0,\infty))$ coincides with the topology on $C_{\overline\nabla_{\infty}}([0,\infty))$. See for example \cite{B68}, Section 18.
Finally, the second assertion follows from Theorem \ref{theorem FS10} and from further relativization to $C_{\nabla_{\infty}}([0,\infty))$ of the topology on $C_{\Ni}([0,\infty))$.
\end{proof}


\section*{Acknowledgements}

Research supported by the European Research Council (ERC) through StG "N-BNP" 306406. The author is grateful to an anonymous Referee and to Pierpaolo De Blasi for carefully reading the manuscript and for several useful suggestions which lead to an improvement of the presentation.


\end{document}